\definecolor{refkey}{gray}{.45}
\definecolor{labelkey}{gray}{.45}
\numberwithin{equation}{section} 
\newtheorem{theorem}{Theorem}[section]
\newtheorem{proposition}[theorem]{Proposition}
\newtheorem{lemma}[theorem]{Lemma}
\theoremstyle{remark}
\newtheorem{remark}[theorem]{Remark}
\theoremstyle{definition}
\newtheorem{example}[theorem]{Example}
\newtheorem{defi}[theorem]{Definition}
\newcommand{\R}{\mathbb{R}}
\newcommand{\ba}{\begin{array}}
\newcommand{\ea}{\end{array}}
\newcommand{\trns}[1]{\widehat{#1}\,}
\newcommand{\tld}[1]{\widetilde{#1}}
\newcommand{\bthm}{\begin{theorem}}
\newcommand{\ethm}{\end{theorem}}
\newcommand{\bprop}{\begin{proposition}}
\newcommand{\eprop}{\end{proposition}}
\newcommand{\blemma}{\begin{lemma}}
\newcommand{\elemma}{\end{lemma}}
\newcommand{\bexmpl}{\begin{example}}
\newcommand{\eexmpl}{\end{example}}
\newcommand{\beqn}{\begin{equation}}
\newcommand{\eeqn}{\end{equation}}
\newcommand{\beqns}{\begin{equation*}}
\newcommand{\eeqns}{\end{equation*}}
\newcommand{\supp}{\operatorname{supp}}
\newcommand{\pr}{\prime}
\newcommand{\pt}{\partial}
\newcommand{\wlimit}{\rightharpoonup}
\newcommand{\RRR}{\mathbb{R}^3}
\newcommand{\ol}{\overline}
\newcommand{\M}{\mathscr{M}}
\newcommand{\cG}{\mathcal{G}}
\renewcommand{\leq}{\leqslant}
\renewcommand{\geq}{\geqslant}
\newcommand{\TTT}{{\mathbb{T}^3}}
\newcommand{\Izero}{\mathcal{M}_0}
\newcommand{\bigO}{\mathcal{O}}
\definecolor{mygreen}{rgb}{0.1,0.75,0.2}
\newcommand{\N}{\mathbb{N}}
\newcommand{\eps}{\epsilon}
\newcommand{\E}{\mathsf{E}}
\newcommand{\varE}{\mathcal{E}}
\newcommand{\F}{\mathsf{F}}
\newcommand{\Om}{\Omega}
\DeclareMathOperator{\dive}{div}
\DeclareMathOperator*{\dist}{dist}
\DeclareMathOperator{\loc}{loc}
\title[Droplet phase in a NLIP under confinement]{Droplet phase in a nonlocal isoperimetric problem under confinement}
\author{Stan Alama}
\address{Department of Mathematics and Statistics, McMaster University, Hamilton, ON}
\email{alama@mcmaster.ca}
\author{Lia Bronsard}
\address{Department of Mathematics and Statistics, McMaster University, Hamilton, ON}
\email{bronsard@mcmaster.ca}
\author{Rustum Choksi}
\address{Department of Mathematics and Statistics, McGill University, Montr\'{e}al, QC}
\email{rustum.choksi@mcgill.ca}
\author{Ihsan Topaloglu}
\address{Department of Mathematics and Applied Mathematics, Virginia Commonwealth University, Richmond, VA}
\email{iatopaloglu@vcu.edu}
\date{\today}                                        
\subjclass{35Q70, 49Q20, 49S05, 74N15, 82D60}
\keywords{nonlocal isoperimetric problem, $\Gamma$-convergence, self-assembly of diblock copolymers, confinement, phase separation, uniformly charged liquid}
\begin{document}

\begin{abstract}
We address small volume-fraction asymptotic properties of a nonlocal isoperimetric functional with a confinement term, derived as the sharp interface limit of a variational model for self-assembly of diblock copolymers under confinement by nanoparticle inclusion. We introduce a small parameter $\eta$ to represent the size of the domains of the minority phase, and study the resulting droplet regime as $\eta\to 0$.  By considering confinement densities which are spatially variable and attain a unique nondegenerate maximum, we present a two-scale asymptotic analysis wherein a separation of length scales is captured due to competition between the nonlocal repulsive and confining attractive effects in the energy. A key role is played by a parameter $M$ which gives the total volume of the droplets at order $\eta^3$ and its relation to existence and non-existence of Gamow's Liquid Drop model on $\R^3$.  For large values of $M$, the minority phase splits into several droplets at an intermediate scale $\eta^{1/3}$, while for small $M$ minimizers form a single droplet converging to the maximum of the confinement density.

\end{abstract}

\maketitle


\section{Introduction}\label{sec:intro}

In this paper we study the asymptotic properties of a nonlocal isoperimetric functional with a confinement term. 
This functional appears as the sharp interface limit of a model of diblock copolymer/nanoparticle blend where a large number static nanoparticles serve as a confinement term, penalizing the energy outside of a fixed region.  We choose a scaling regime in which the mass fraction between the two phases tends to zero, but with very strong nonlocal interactions, wherein the minimizing phases resemble small spherical inclusions of one phase in a large sea of the second phase.  This is often called the {\it droplet} regime, in which the sparse $A$-phase can be described effectively by the droplet centers, which we think of as {\it particles} \cite{ChPe2010}.  
The advantage of the droplet regime is to permit the decomposition of the nonlocal effects into {\it self-effects} on the shape of a single droplet and {\it interaction effects} between different particles.

Let $\eta>0$ be a small parameter, which will represent the scale of the radius of the droplets.
We consider periodic configurations $v\in BV(\TTT;\{0,\eta^{-3}\})$, defined on the standard unit torus $\TTT$, which we model as the unit cube $\left[-\frac12,\frac12\right]$ with periodic boundary conditions, and subject to a mass constraint,
$$   \int_{\TTT} v \,dx= M,   $$
for a constant $M>0$.  For a fixed function $\rho\in C(\TTT)$, representing the local density of nanoparticles we analyze the energy functional,
\beqn\label{venergy}
\E_\eta(v):=  \eta\, \int_{\TTT} |\nabla v| \,+\, \eta \, \|v-M\|_{H^{-1}(\TTT)}^2
- \int_{\TTT}  v(x)\rho(x)\, dx 
\eeqn
in the limit as $\eta\to 0$.  The physical background and justification for the choices of the parameters appearing in $\E_\eta$ will be discussed in the Appendix. Here, the first term in the energy denotes the \emph{total variation} of the function $v$ and is defined as
	\[
		\int_{\TTT} |\nabla v| \,:=\, \sup \left\{ \int_{\TTT} v \dive \varphi \,dx \colon \varphi \in C_0^1(\TTT;\RRR), |\varphi(x)| \leq 1 \right\}.
	\]
For characteristic functions $\chi_A\in BV(\TTT;\{0,1\})$, this term computes the perimeter of the interface $\partial A$. The second term, on the other hand, is nonlocal and is defined via  
	\[ 
		\left\|v\, -\,  M\right\|_{H^{-1}(\TTT)}^2 \, 
		=\int_{\TTT}\int_{\TTT} G(x,y)v(x)\, v(y)\, dxdy =
		\, \int_\TTT |\nabla w_v|^2 \, dx,
	\]
where $G(x,y)$ is the mean-zero Laplace Green's function on $\TTT$, and $w_v$ is the solution of $-\Delta w_v = v-M$ on $\TTT$.  

The last term reflects the presence of nanoparticles of density $\rho$ in the sample, which attract the $\{\eta^3v=1\}$ phase of the copolymer.  Thus we expect that droplets accumulate near maxima of the density $\rho$.  For most of the paper we will assume that the density has a unique non-degenerate global maximum at the origin.  
\medskip
	\begin{itemize} 
	\setlength\itemsep{1em}
		\item[(H1)] $\rho\in C(\TTT)$  with $\rho \geq 0$.
		\item[(H2)] $\rho_{\max}:=\rho(0)>\rho(x)$ for all $x\in\TTT\setminus\{0\}$.
		\item[(H3)] $\rho \in C^2(B_r)$ for some $r>0$ and 
			\beqn 
				\rho(x) = \rho_{\max} - q(x) + o(|x|^2)   \qquad {\rm as} \,\,\, |x|\to 0 \qquad {\rm where} 
				\nonumber
			\eeqn
			\beqn\label{eqn:q_defn} 
				q(x) \, := \frac12\, \sum_{i,j=1}^3 H_{ij} x_i x_j \qquad {\rm for} \,\,\, x = (x_1,x_2,x_3) \in \TTT
			\eeqn
		and $H_{ij}$ are the entries of the Hessian matrix of $-\rho$ given by
			\[
				H_{ij} = -\frac{\pt^2 \rho}{\pt x_i \pt x_j}(0)
				\qquad \text{with} \quad H_{ij} x_i x_j \geq C |x|^2 \quad \text{for some constant } C>0.
			\]
	\end{itemize}
In section~\ref{sec:degenerate_max} we will examine how to treat $\rho$ which have degenerate but homogeneous behavior $\rho_{max}-\rho(x)\sim |x|^q$, $q>2$, at the maximum point. 

We also note that it is not necessary to impose periodic boundary conditions to observe the droplet splitting and confinement described in this paper; for instance, it is both physically and mathematically reasonable to replace the torus $\TTT$ by a smooth bounded domain $\mathcal{D}\subset\R^3$ with Neumann boundary conditions.

\bigskip

\subsection*{Droplet splitting}  In their study of the NLIP under droplet scaling without confinement, (that is, with $\rho(x)\equiv 0$,) Choksi and Peletier \cite{ChPe2010} show that a minimizing sequence $v_\eta$ will decompose into droplets of radius on the order of $\eta$; that is, $v_\eta$ is approximated by a finite sum of weighted Dirac measures, with centers $x_\eta^i\in\TTT$.  The shape of the droplets is determined by the leading order term in an expansion of the energy:  blowing up at scale $\eta$, they arrive at a nonlocal isoperimetric problem (NLIP) on all of $\RRR$.  In the unconfined setting of \cite{ChPe2010}, the location of the centers $x_\eta^i$ is determined by the next order term in the energy expansion, a Coulomb-like repulsion arising from the nonlocal term.  The result is convergence of minimizers to a discrete pattern of droplets spread out on $\TTT$, where the interdroplet distances are bounded below.  

For our energy \eqref{venergy} with nonconstant density $\rho$ we observe a different qualitative picture. The confinement term in $\E_\eta$ will be felt at the level of particle interactions, drawing the centers $x_\eta^i$ towards the global maximum of the nanoparticle density $\rho(x)$.
This is the essential difference between our problem and that of \cite{ChPe2010}:  as the droplet centers will be drawn towards a single point we must isolate individual droplets at a much smaller scale.  The interaction terms in the energy expansion of \cite{ChPe2010} are of order one, and their method suffices to discern droplets at arbitrarily small distance of order one.  This is not enough for our setting, where the second-order term in the energy expansion is of much smaller order than the order of approximation of the first-order term in \cite{ChPe2010}.  Thus, we must develop a more refined method of isolating the droplets and calculating the interaction distance on a much smaller scale.

\medskip

To motivate our main result, let us consider an ansatz for $v_\eta$, which will form the basis of the upper bound on the energy (see Lemma~\ref{lem:upper}.)  We will suppose that minimizers form $n$ droplets, centered at points $x_\eta^i=\delta\, p_i$ with fixed $p_1,\dots,p_n$ and $\delta=\delta(\eta)\to 0$, reflecting the tendency of the droplets to approach the maximizer of $\rho(x)$ at a particular rate (to be determined.)  We thus define an admissible test configuration,
\beqn\label{eq:ansatz}
   \nu_\eta := \sum_{i=1}^n  \eta^{-3}w_i\left( {x-\delta p_i\over \eta}\right), 
\eeqn
where $w_i\in BV(\RRR;\{0,1\})$ are characteristic functions representing the droplet sets, blown up by scale $\eta$ to $\RRR$.  We define 
$m^i:=\int_{\RRR} w_i\,dx$, and note that the constraint $\int_{\TTT} \nu_\eta\,dx=M$ forces
$\sum_{i=1}^n m^i =M$.

We may then evaluate  $\E_\eta(\nu_\eta)$ asymptotically, expressing the $H^{-1}$-norm in terms of the Green's function, $G(x,y)\sim 1/4\pi|x-y|$, as the points coalesce.  Assuming the distance between droplet centers $\delta(\eta)\gg\eta$, a back-of-the-envelope calculation yields,
\beqn\label{eq:enansatz}
\E_\eta(\nu_\eta) \simeq \sum_{i=1}^n \left[ \int_{\R^3} |\nabla w_i| 
  + \left\| w_i\right\|^2_{H^{-1}(\R^3)}\right] - M\rho_{\max}
  + \left[ {\eta\over\delta}\sum_{\substack{i,j=1\\ i\neq j}}^n
       \frac{m^i\,m^j}{{4\pi}|p_i-p_j|} + \delta^2\sum_{i=1}^n m^i q(p_i)\right].
\eeqn
Optimizing the right-hand expression over $\delta>0$ (holding all other quantities fixed) we predict the droplet separation scale  $\delta=\mathcal{O}(\eta^{1/3})$ (see Figure~\ref{fig:attr_to_origin}).  
Thus, we expect droplet separation to occur on {\bf two different scales} in the length and in energy.  The droplets themselves have characteristic length $\eta$, and contribute to the energy at order one, while the pattern they form is observed on the much larger length scale $\eta^{1/3}$, which affects the energy to order $O(\eta^{2/3})$.

The first sum in the expansion is the energy for Gamow's Liquid Drop model, and this indicates how droplet shape is determined via minimization.  Droplet profiles $z_i$ should be minimizers for the nonlocal isoperimetric problem on $\R^3$:
	\beqn\label{eqn:nlip}
 e_0 (m) \,: = \, 
\inf \left\{ \int_{\R^3} |\nabla z| \, + \,
\| z\|_{H^{-1} (\R^3)}^2 
 \,\,  : \,  \, z \in BV (\R^3, \{0,1\}), \,\, 
\int_{\R^3} z \,dx \, = \, m \right\},
\eeqn
where 
\[ \| z\|_{H^{-1} (\R^3)}^2 \, = \, \int_{\R^3} \int_{\R^3} \frac{z(x) \, z(y)}{4 \pi |x - y|} \, dx \, dy. \]
This problem has been extensively studied, in the form above and in various generalizations; see \cite{BoCr14,Ju2014,FrLi2015,KnMu2013,KnMu2014,LO1,LO2, FKN2}.
In  \cite{ChPe2010} it was conjectured that there 
exists a critical mass $m_c$ such that minimizers are balls 
for $m \leq m_c$ and  fail to exist otherwise. 
The results will play an important role in this article  and we summarize them in the next theorem, which may be found in \cite{KnMu2014} (see also \cite{LO1}):
\begin{theorem}[Kn\"upfer-Muratov]\label{thm:e0}
 There exist three constants $0<m_{c_0} \leq m_{c_1} \leq m_{c_2}$ such that the following hold.
 	\begin{itemize}
 	 \item[(i)] If $m \leq m_{c_1}$ then $e_0(m)$ admits a minimizer. If $m \leq m_{c_0}$ the minimizer is unique (up to translation), and is given by the ball of volume $m$. The ball of volume $m$ ceases to be the minimizer of $e_0(m)$ for $m>m_{c_0}$.	 
 	  \item[(ii)] If $m>m_{c_2}$ then $e_0(m)$ does not admit a minimizer.
 	\end{itemize}
\end{theorem}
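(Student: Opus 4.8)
The plan is to assemble the three thresholds from the cited works, organizing everything around the scaling of $e_0$ and a concentration--compactness analysis; below I indicate the mechanisms rather than the computations. Writing a competitor of volume $m$ as $z(x)=\zeta(x/m^{1/3})$ with $\int_{\R^3}\zeta=1$ gives $\int_{\R^3}|\nabla z|=m^{2/3}\int_{\R^3}|\nabla\zeta|$ and $\|z\|_{H^{-1}(\R^3)}^2=m^{5/3}\|\zeta\|_{H^{-1}(\R^3)}^2$, so that
\[ e_0(m)=m^{2/3}\inf\Big\{ \int_{\R^3}|\nabla\zeta|+m\,\|\zeta\|_{H^{-1}(\R^3)}^2 \ :\ \int_{\R^3}\zeta=1 \Big\} . \]
Thus the volume $m$ acts as a coupling constant weighting the repulsive nonlocal term against perimeter. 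Two consequences are immediate: the ball $B_m$ of volume $m$ is a stationary point for every $m$, with energy $g(m):=c_1m^{2/3}+c_2m^{5/3}$ for explicit $c_1,c_2>0$; and subadditivity $e_0(m)\le e_0(m_1)+e_0(m-m_1)$ holds for all $0<m_1<m$, since two far-apart droplets form an admissible competitor (taking $n$ equal droplets of optimal mass one moreover gets $e_0(m)\le C_\ast m$ for $m$ large, which is why large mass is qualitatively different).

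For the threshold $m_{c_0}$: after the rescaling above the functional is, for small $m$, a small perturbation of the perimeter, whose unique minimizer up to translation is the ball. Since among sets of fixed volume the ball \emph{maximizes} the nonlocal term (Riesz rearrangement), a non-ball competitor only pays in perimeter, and one shows --- via the sharp quantitative isoperimetric inequality together with a regularity-and-selection-principle argument reducing to nearly spherical sets, as in \cite{KnMu2013,Ju2014,BoCr14} --- that this perimeter deficit dominates the nonlocal gain for $m$ sufficiently small, so $B_m$ is the unique minimizer of $e_0(m)$. Those works further establish that the set of masses for which $B_m$ is the global minimizer is a bounded interval, whose right endpoint is $m_{c_0}$; for $m>m_{c_0}$ the ball is no longer the minimizer.

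Existence up to $m_{c_1}$ is obtained by applying concentration--compactness to a minimizing sequence $\{z_k\}$ for $e_0(m)$. The uniform perimeter bound and the isoperimetric inequality exclude the vanishing alternative (mass cannot spread to zero density), and in the dichotomy alternative the sequence separates into clusters carrying masses $m_1$ and $m-m_1$ that drift apart, forcing $e_0(m)\ge e_0(m_1)+e_0(m-m_1)$; hence whenever the strict inequality $e_0(m)<e_0(m_1)+e_0(m-m_1)$ holds for every $0<m_1<m$, only the compact alternative survives and a minimizer exists. One checks (Kn\"upfer--Muratov \cite{KnMu2013,KnMu2014}, Frank--Lieb \cite{FrLi2015}) that this strict subadditivity holds on a maximal interval of masses with right endpoint $m_{c_1}$, existence persisting at the endpoint by lower semicontinuity; and $m_{c_1}\ge m_{c_0}$ because a global minimizer is in particular a minimizer.

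The hard part is nonexistence for $m>m_{c_2}$. Heuristically, for large $m$ the nonlocal term dominates and any configuration can strictly lower its energy by shedding a droplet to infinity, so $e_0(m)$ can only be approached, never attained; the difficulty is to quantify this gain against the perimeter cost of cutting. The argument first invokes the regularity theory for quasi-minimizers of perimeter plus lower-order terms --- smoothness of $\partial E$, uniform interior/exterior density estimates, and a bound on the spread of a minimizer --- to control the geometry of a putative minimizer $E=\{z=1\}$. One then excises from $E$ a small piece of mass $\varepsilon$ taken where the nonlocal potential $\int_{\R^3}\chi_E(y)/(4\pi|\,\cdot\,-y|)\,dy$ is large and reinserts it as a ball infinitely far away: the nonlocal energy drops by an amount comparable to $\varepsilon$ times that potential while the perimeter changes by at most $C\varepsilon^{2/3}$, and since the density estimates force the potential to be $\gtrsim m^{2/3}$ somewhere, for $m$ large the net change is negative, contradicting minimality. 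Optimizing $\varepsilon$ and sharpening the geometric estimates yields the explicit threshold $m_{c_2}$ (successively improved in \cite{KnMu2014,LO1,LO2,FKN2}), and $m_{c_1}\le m_{c_2}$ by definition. I expect this balance between the perimeter cost $\sim\varepsilon^{2/3}$ of cutting and the nonlocal saving $\sim\varepsilon\,m^{2/3}$, together with the a priori estimates needed to localize the cut, to be the technical heart of the whole statement.
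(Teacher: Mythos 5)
The paper does not prove Theorem~\ref{thm:e0}: it is stated as a summary of results from the cited literature (Kn\"upfer--Muratov, Julin, Bonacini--Cristoferi, Frank--Lieb, Lu--Otto, Frank--Killip--Nam), and no argument for it appears anywhere in the text. So there is no ``paper proof'' to match your proposal against; what can be assessed is whether your sketch is a faithful account of how those works establish the three assertions. For part (i) it largely is: the scaling identity $e_0(m)=m^{2/3}\inf\{\int|\nabla\zeta|+m\|\zeta\|_{H^{-1}}^2\}$, the Riesz-rearrangement observation that the ball maximizes the nonlocal term, the reduction to nearly spherical sets via a selection principle combined with the quantitative isoperimetric inequality (needed because the crude bounds give perimeter excess $\gtrsim\alpha^2$ against nonlocal gain $\lesssim\alpha$), the monotonicity argument showing that the set of masses for which the ball is optimal is an interval, and the concentration--compactness/strict-subadditivity route to existence are all the correct mechanisms. (Your ``existence persists at the endpoint by lower semicontinuity'' is glossed --- lower semicontinuity alone does not give compactness of a minimizing sequence at the endpoint mass --- but this is harmless since one may take $m_{c_1}$ to be any explicit threshold below the supremum of masses with existence.)

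The one substantive weak point is the nonexistence mechanism in part (ii). Your argument needs the potential $v_E(x)=\int_E(4\pi|x-y|)^{-1}dy$ to exceed a fixed constant (indeed you claim $\gtrsim m^{2/3}$) somewhere on a putative minimizer $E$, so that shedding mass $\varepsilon$ saves $\sim\varepsilon\,v_E$ against a perimeter cost $\sim\varepsilon^{2/3}$. But the density estimates and indecomposability of minimizers only yield a diameter bound of order $m$, whence $v_E\gtrsim m/\mathrm{diam}(E)\gtrsim 1$ --- a bound that does not grow with $m$ and need not beat the perimeter constant; the bound $v_E\gtrsim m^{2/3}$ would require the minimizer to be a priori ball-like (diameter $\lesssim m^{1/3}$), which is exactly what is unknown. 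The cited proofs close this loop differently: one cuts $E$ by a hyperplane $\{x\cdot e=t\}$, sends the two halves to infinity, and deduces from minimality that the interaction energy of the halves is at most the area of the cross-section $\mathcal{H}^2(E\cap\{x\cdot e=t\})$; integrating over $t$ and averaging over directions $e$ turns this into the integral-geometric inequality $c\,m^2\leq\int_E\int_E\frac{(x-y)\cdot e\,)_+}{4\pi|x-y|}\,dx\,dy\cdot C\leq C'm$, i.e.\ $m\leq C$ for any minimizer. Your ``excise a small ball where the potential is large'' heuristic captures the physics but, as stated, the quantitative step that makes it rigorous is missing; if you intend to present part (ii) as more than a citation, you should replace it with the hyperplane-cut-and-average argument of Kn\"upfer--Muratov, Lu--Otto, and Frank--Killip--Nam.
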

\noindent  
To date, it remains an open problem to prove or disprove whether any (or all) of the constants $m_{c_i}$, $i=0,1,2$, above are pairwise equal.  

\medskip

Returning to the energy expansion of the ansatz \eqref{eq:enansatz}, we may now complete the heuristic picture of minimizers of $\E_\eta$ in terms of the Gamow functional.  Defining the ``good'' set of $m$ for which the nonlocal isoperimetric problem attains a minimizer, 
\[ 
		\Izero \,:=\, \left\{ m>0 \colon e_0(m) \text{ admits a minimizer} \right\}, 
\]
we observe that when the total mass $M\in\Izero$ (for instance, when $M\leq m_{c1}$ is small enough,) there is no need for splitting, and we anticipate that minimizers $v_\eta$ remain connected as $\eta\to 0$.  However, if $M\not\in\Izero$ then we infer that according to \eqref{eq:enansatz} minimizers will split into droplets with mass $m^i\in\Izero$, for which there is an optimal droplet blowup function $z_i\in BV(\RRR;\{0,1\})$, and with $\sum_{i=1}^n m^i=M$.  
That is, the collection of droplet masses must lie in the special set,
\[ 
		\mathcal{M} \,:=\, \left\{ \{m^i\}_{i=1}^n \colon \ n\in\N, \ m^i \geq 0, \ \sum_{i=1}^n m^i = M, \text{ and }e_0(m^i)\text{ admits a minimizer for each }i \right\}. \]
These droplets will then arrange themselves in a pattern with separation distance $\delta(\eta)=O(\eta^{1/3})$, so as to minimize the interaction energy formed by combining Coulomb repulsion and confinement,
$$   \F_{m^1,\dots,m^n}(p_1,\dots,p_n):= 
\sum_{\substack{i,j=1\\ i\neq j}}^n
       \frac{m^i\,m^j}{{4\pi}|p_i-p_j|} + \sum_{i=1}^n m^i q(p_i),
$$
with $q(x)$ the Hessian of $\rho(x)$ near $x=0$, defined in (H3).  (See Figure~\ref{fig:attr_to_origin}).

\begin{figure}[ht!]
		\begin{center}
			\includegraphics[height=6cm]{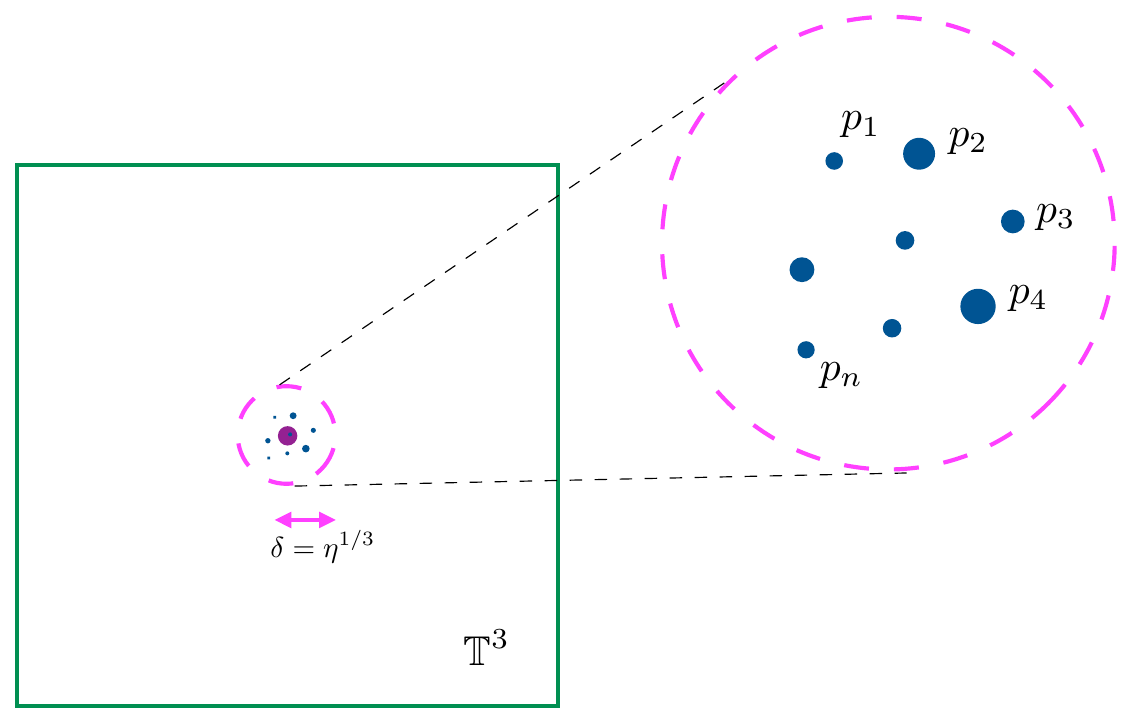}
		\end{center}
		\caption{
        The attraction to the origin and scaling at the rate $\delta=\eta^{1/3}$.
     }
   		\label{fig:attr_to_origin}
\end{figure} 

Our main result is to confirm this expected behavior by means of a precise asymptotic expansion of the energy of minimizers.  We prove:

\begin{theorem}\label{thm:minimizers}
Let $v_\eta$ be minimizers of $\E_\eta$ in $BV(\TTT, \{0,\eta^{-3}\})$ with
$\int_{\TTT}v_\eta \, dx=M$.  
\begin{itemize}
\item[(i)] For any $r>0$, $\supp v_\eta\subset B_r(0)\subset\TTT$ for all sufficiently small $\eta>0$.
\item[(ii)]  If $M\in\Izero$, then there exists a subsequence of $\eta\to 0$ and points $y_\eta\in\TTT$ with $|y_\eta|\leq C\eta^{1/2}$ such that  
$$   v_\eta - \eta^{-3} z_M\left({x-y_\eta\over\eta}\right) \longrightarrow 0\quad\text{ in $L^1(\TTT)$, }
$$
where $z_M$ attains the minimum $e_0(M)$.
%
\item[(iii)]  If $M\not\in\Izero$, then, there exists a subsequence of $\eta\to 0$, $n\in\N$, $\{m^i\}_{i=1}^n\in\mathcal{M}$, and distinct points $\{x^1_\eta,\dots,x^n_\eta\}$ such that:
\beqn\label{eq:split0}
 v_\eta - \sum_{i=1}^n \eta^{-3} w_i\left({x - x^i_\eta\over\eta}\right) \longrightarrow 0, \quad
 \text{in $L^1(\TTT)$},
\eeqn
where $w_i\in BV(\R^3, \{0,1\})$ attains the minimum in $e_0(m^i)$, $i=1,\dots,n$.  In addition,
\begin{gather*}  
\eta^{-\frac13}x_\eta^i \longrightarrow x_i \in\RRR, \quad\text{and}
\\  
\E_\eta(v_\eta) = e_0(M) - M\rho_{\max} +   \eta^{2/3}\left[
  \sum_{i=1}^n m^i q(x_i) + \frac{1}{4\pi}\sum_{\substack{i,j=1\\ i\neq j}}^n \frac{m^i\,m^j}{|x_i-x_j|}\right] +o\left(\eta^{2/3}\right).
\end{gather*}
Moreover, the expression in brackets above is minimized by the choice of points $\{x_1,\dots,x_n\}$ given the values $\{m^i\}_{i=1}^n\in\mathcal{M}$.
\end{itemize}
\end{theorem}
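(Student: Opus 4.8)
\emph{Strategy.} The proof proceeds by matching upper and lower bounds for $\min\E_\eta$, reinforced by a structural description of minimizers as a bounded collection of droplets that, to leading order, minimize the Gamow functional $e_0(m^i)$, and that live on the two length scales $\eta$ (droplet radius) and $\eta^{1/3}$ (inter-droplet distance and distance to the origin). Write $v_\eta = \eta^{-3}\chi_{A_\eta}$, $|A_\eta| = M\eta^3$; since $G$ is mean-zero, $\eta\|v_\eta - M\|_{H^{-1}(\TTT)}^2 = \eta\iint_{\TTT\times\TTT}G(x,y)v_\eta(x)v_\eta(y)\,dx\,dy$. I will freely use the elementary consequences of Theorem~\ref{thm:e0} that $e_0$ is subadditive and continuous, that $e_0(M) = \inf\{\sum_i e_0(m^i) : \sum_i m^i = M\}$, and that this infimum is attained by a partition into masses in $\Izero$, so $\mathcal{M}\neq\emptyset$. \emph{Step 1 (upper bound).} Inserting the ansatz \eqref{eq:ansatz} with profiles $w_i$ minimizing $e_0(m^i)$, centers $\eta^{1/3}p_i$, and using $G(x,y) = \tfrac1{4\pi|x-y|} + (\text{smooth})$ together with (H3), a direct computation (this is Lemma~\ref{lem:upper}, making \eqref{eq:enansatz} rigorous) gives, for every $\{m^i\}\in\mathcal{M}$ and all distinct $p_1,\dots,p_n\in\RRR$,
$$\min_{BV}\E_\eta \;\leq\; \textstyle\sum_i e_0(m^i) - M\rho_{\max} + \eta^{2/3}\,\F_{m^1,\dots,m^n}(p_1,\dots,p_n) + o(\eta^{2/3}).$$
As $\F$ is coercive (Coulomb blow-up on the diagonal, quadratic growth of $q$), it has a minimum in $p$; minimizing over $p$ and restricting to partitions with $\sum_i e_0(m^i) = e_0(M)$ gives $\min\E_\eta \leq e_0(M) - M\rho_{\max} + C\eta^{2/3}$. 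If $M\in\Izero$, the single droplet $\eta^{-3}z_M(x/\eta)$ centered at $0$ yields the sharper bound $\min\E_\eta \leq e_0(M) - M\rho_{\max} + C\eta$, since on its support $\rho = \rho_{\max} + O(\eta^2)$ and the torus Green's function differs from the Coulomb kernel by $O(\eta)$ after rescaling.

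\emph{Step 2 (concentration, decomposition, part (i)).} Step 1 bounds $\eta\int|\nabla v_\eta| + \eta\|v_\eta - M\|_{H^{-1}}^2$ uniformly, so the analysis of \cite{ChPe2010} at scale $\eta$ applies: up to translation $A_\eta$ splits into a bounded number of pieces of diameter $O(\eta)$ with masses $m^i_\eta$, each converging after blow-up $x\mapsto x/\eta$ to a set in $BV(\RRR;\{0,1\})$, and
$$\eta\int_\TTT|\nabla v_\eta| + \eta\|v_\eta - M\|_{H^{-1}(\TTT)}^2 \;\geq\; \textstyle\sum_i e_0(m^i_\eta) + o(1) \;\geq\; e_0(M) + o(1).$$
Comparing with Step 1 and using $-\int v_\eta\rho \geq -M\rho_{\max}$ forces $\int_\TTT v_\eta(\rho_{\max}-\rho)\to 0$, whence by (H2), $\int_{\TTT\setminus B_r(0)}v_\eta\to 0$ for every $r>0$. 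A truncation argument — excising any mass in $\TTT\setminus B_r(0)$ and reinserting it near $0$ strictly lowers $\E_\eta$ once the bulk already concentrates there — upgrades this to $\supp v_\eta\subset B_r(0)$ for all small $\eta$, which is (i). The same comparison, being sharp, forces $m^i_\eta\to m^i$ with $\{m^i\}\in\mathcal{M}$ and $\sum_i e_0(m^i) = e_0(M)$, and each blown-up piece converging in $L^1$ to a minimizer $w_i$ of $e_0(m^i)$; taking $x^i_\eta$ to be the barycenters then gives \eqref{eq:split0}.

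\emph{Step 3 (fine expansion, parts (ii)--(iii)).} Set $\mathrm{exc}_\eta := \E_\eta(v_\eta) - (e_0(M) - M\rho_{\max})$. Step 1 gives $\mathrm{exc}_\eta\leq C\eta^{2/3}$ (and $\leq C\eta$ if $M\in\Izero$); since the well-separated pieces satisfy $\eta\int|\nabla v_\eta| + \eta\|v_\eta - M\|_{H^{-1}}^2 \geq \sum_i e_0(m^i_\eta) - C\eta \geq e_0(M) - C\eta$ and (H3) gives $\rho_{\max}-\rho\geq\tfrac C2|x|^2$ on $\supp v_\eta$, one also gets $\mathrm{exc}_\eta \geq \tfrac C2\sum_i m^i_\eta|x^i_\eta|^2 - C\eta$, hence $|x^i_\eta| = O(\eta^{1/3})$, and $O(\eta^{1/2})$ when $M\in\Izero$. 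If $M\in\Izero$, any configuration with $n\geq 2$ droplets exceeds $e_0(M) - M\rho_{\max}$ by at least $c\eta^{2/3}$ — either because $\sum_i e_0(m^i_\eta) > e_0(M)$, or, when $\sum_i e_0(m^i_\eta) = e_0(M)$, by combining the confinement loss $\gtrsim|x^i_\eta-x^j_\eta|^2$ (via $|x^i_\eta|^2+|x^j_\eta|^2\geq\tfrac12|x^i_\eta-x^j_\eta|^2$) with the Coulomb interaction $\sim\eta/|x^i_\eta-x^j_\eta|$ and minimizing $t^2+\eta/t$ at $t\sim\eta^{1/3}$ — which contradicts $\mathrm{exc}_\eta\leq C\eta\ll\eta^{2/3}$. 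So $n=1$, $w_1=z_M$, $y_\eta:=x^1_\eta$ obeys $|y_\eta|\leq C\eta^{1/2}$, proving (ii). For (iii), pass to a subsequence with $x_i:=\lim\eta^{-1/3}x^i_\eta$; then $\mathrm{exc}_\eta\geq\tfrac{\eta}{4\pi}\sum_{i\neq j}\tfrac{m^i_\eta m^j_\eta}{|x^i_\eta-x^j_\eta|} - C\eta$ versus $\mathrm{exc}_\eta\leq C\eta^{2/3}$ forces $|x^i_\eta-x^j_\eta|\geq c\eta^{1/3}$, so the $x_i$ are distinct. Expanding to order $\eta^{2/3}$ — perimeter exactly additive; $H^{-1}$ self-energies equal $\|w_i\|_{H^{-1}(\RRR)}^2$ up to $O(\eta)$ by the Green's-function expansion, the cross term equals $\eta^{2/3}\tfrac1{4\pi}\sum_{i\neq j}\tfrac{m^i m^j}{|x_i-x_j|} + o(\eta^{2/3})$; and, by (H3) with $x^i_\eta$ the barycenters, $\int v_\eta\rho = M\rho_{\max} - \eta^{2/3}\sum_i m^i q(x_i) + o(\eta^{2/3})$ — and summing the self-energies to $e_0(M)$ (Step 2) gives the stated formula for $\E_\eta(v_\eta)$. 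Finally, inserting the same masses $\{m^i\}$ with arbitrary centers $\{p_i\}$ into the Step 1 bound shows $\F_{m^1,\dots,m^n}(x_1,\dots,x_n)\leq\F_{m^1,\dots,m^n}(p_1,\dots,p_n)$ for all $\{p_i\}$, so the $x_i$ minimize the bracketed interaction energy.

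\emph{Main obstacle.} The delicate point is making Steps 2--3 quantitative in the regime where the droplets sit at mutual distance of order $\eta^{1/3}$ — a scale that vanishes but dwarfs the droplet size $\eta$, so that there is no single global blow-up limit. One must control, all with error $o(\eta^{2/3})$, the ``leftover'' structure not captured by the finitely many rigid pieces, the near-optimality of each droplet's shape as an $e_0$-minimizer, the convergence of the masses $m^i_\eta\to m^i$, the replacement of the torus Green's function by the Coulomb kernel, and the quadratic approximation of $\rho$, while isolating the interaction and confinement contributions — each genuinely of size $\eta^{2/3}$ — cleanly; the truncation step in (i) is likewise the technically fussy ingredient there.
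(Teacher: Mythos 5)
Your overall strategy (matching upper and lower bounds built on a droplet decomposition living on the two scales $\eta$ and $\eta^{1/3}$) is exactly the paper's, and your Step 1 and the final matching argument in Step 3 coincide with Lemma~\ref{lem:upper} and the proof in Section~\ref{sec:proof_of_conv}. However, there are genuine gaps in Steps 2--3. First, you assert that ``the analysis of \cite{ChPe2010} at scale $\eta$ applies'' to produce a \emph{bounded} number of pieces. This is precisely where the present problem departs from \cite{ChPe2010}: there the droplet centers converge to \emph{distinct} points of $\TTT$, so the weak-* limit separates them; here all centers collapse to the origin, and the weak-* limit is the single measure $M\delta_0$, which cannot distinguish droplets. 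The paper instead runs a concentration-compactness argument on the blown-up sets $\widehat\Om_\eta=\eta^{-1}\Om_\eta\subset\RRR$ (Lemma~\ref{lem:compactness}, using Frank--Lieb's compactness for sets of finite perimeter), and the finiteness of $n$ is then a separate, delicate step: it is proved in Step~7 of that lemma by exploiting the strict concavity of $m\mapsto e_0(m)$ near $m=0$ (explicit computation with balls) to show at most one droplet mass can lie below a threshold. You never address finiteness, and without it the interaction energy $\F_{m^1,\dots,m^n}$ and the order-$\eta^{2/3}$ expansion are not even well defined.

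Second, your sharp lower bound in Step 3 requires $\eta\int|\nabla v_\eta|+\eta\|v_\eta-M\|_{H^{-1}}^2\geq\sum_i e_0(m^i_\eta)-C\eta$ with the residual mass and the splitting errors controlled to order $o(\eta^{2/3})$. The concentration-compactness decomposition only delivers these with $o(1)$ errors, which is useless at order $\eta^{2/3}$. You correctly flag this as the ``main obstacle'' but supply no mechanism to overcome it. The paper's mechanism is the regularity theory of Section~\ref{sec:regularity}: one first replaces the mass constraint by a penalization with an $\eta$-independent multiplier (Lemma~\ref{lem:penalized}), deduces that the blow-ups of minimizers are $\omega$-minimizers of perimeter uniformly in $\eta$ (Lemma~\ref{lem:omega_min}), and then uses the compactness/regularity of $\omega$-minimizers to conclude that the residual sets $\Xi_\eta$ and $V^n_\eta$ are exactly empty for small $\eta$ and that $v_\eta=\sum_i u^i_\eta$ with $U^i_\eta\subset B_R(0)$ uniformly (Lemma~\ref{lem:energy_split}); only then do the perimeters split exactly and the Green's-function errors reduce to $\mathcal{O}(\eta)$. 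Your ``excise and reinsert'' argument for part (i) is in the spirit of the paper's dilation construction and could plausibly be made to work for killing the far-away mass via the isoperimetric deficit, but it does not by itself yield the uniform boundedness of the blown-up droplets or the exact vanishing of the intermediate-scale residue, both of which are needed for the $o(\eta^{2/3})$ accuracy of the lower bound.
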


\begin{remark}\label{rho_remark}
As noted above, we may also consider $\rho$ which attain a unique degenerate maximum, assuming a different homogeneity around the maximum point.  The analysis will be essentially the same, except for the scale of droplet separation and the form of the interaction energy;  see Section~\ref{sec:degenerate_max}.  In case $\rho$ attains its global maximum at a finite number of points we expect the same general behavior near one or several of the maximizers, but the distribution of mass around each one depends subtly on the shape of the function at each point.
\end{remark}

Part (ii) of Theorem~\ref{thm:minimizers} implies that when $M\in\Izero$ there is no splitting of the droplet:  apart from a possible drift of $\mathcal{O}(\eta^{1/2})$ away from the origin, it is essentially a rescaling of the minimizer of the nonlocal isoperimetric problem at scale $\eta$.  It is known \cite{LO1, KnMu2014} that the $e_0(M)$ minimizers (when they exist) have connected compact support. 
Thus, the theorem asserts that splitting can only occur if the $e_0(M)$ problem fails to achieve a minimum and that the separation must be $\mathcal{O}(\eta^{1/3})$ and at no other scale.  For clarity, we note that for $\eta$ sufficiently small, the sum in \eqref{eq:split0} is disjointly supported in $B_{\frac14}(0)\subset\RRR$, and may thus be treated as a function in $BV(\TTT;\{0,\eta^{-3}\})$. 


The interaction energy $\F_{m^1,\dots,m^n}$, which determines the location of the droplets at $\eta^{1/3}$-scale, is (for fixed mass distribution $\{m^i\}_{i=1}^n$) coercive and clearly attains a minimum among all possible configurations $\{x_i\}_{i=1}^n$ in $\R^3$.  The two-dimensional analogue of $\F_{m^1,\dots,m^n}$, with logarithmic repulsion, was studied in \cite{GuSh}, but in the case of equal masses.  (The motivation behind \cite{GuSh} came from Ginzburg--Landau vortices, which have quantized flux playing the role of the mass in $\F_{m^1,\dots,m^n}$.)  We know of no studies of $\F_{m^1,\dots,m^n}$ which address the fundamental question of how to optimally choose both $\{m^i\}_{i=1}^n$ and $\{x_i\}_{i=1}^n$ to minimize $\F_{m^1,\dots,m^n}$.

We also note that similar concentration or coalescence phenomena appear in various examples of singularly perturbed variational problems, notably in low density phases in a uniformly charged liquid \cite{KnMuMa2016} and for vortices in the 2D Ginzburg--Landau model of superconductivity \cite{SaSe2007}. See, for example, Chapter~7 of \cite{SaSe2007}, in which finitely many vortices concentrate at specific points in a sample subjected to an external magnetic field $h_{ex}$ of order $\ln\kappa$, where $\kappa$ is the Ginzburg--Landau parameter.  As above, there are two length scales:  the radii of the vortex cores are given by $\kappa^{-1}$, and the distance between coalescing vortices is of order $h_{ex}^{-1/2}$.  Note in particular the similarity of their renormalized energy (11.2) and our interaction energy $\F_{m^1,\dots,m^n}$.


Finally, attractive-repulsive nonlocal interaction energies of the form $\F_{m^1,\dots,m^n}$ have attracted much interest lately in connection with models of pattern formation in biological aggregations such as swarming or collective behavior of multi-agent systems \cite{BrechtUminskyKolokolnikovBertozzi,ChafaiGozlanZitt,Chuang_etal,Morale:Capasso}. In these models the particles can be approximated by a density distribution leading to a continuum-level energy, and minimization with respect to these densities yields challenging variational and geometric problems (see e.g. \cite{BCT,FrLi2016}).

Our paper is organized as follows: In the appendix, we describe the physical origins of the model as well as the derivation of the droplet scaling regime which we will use in the paper.  Section~\ref{sec:second_ord} describes the concentration structure of minimizers.  The most important result there is a key concentration-compactness lemma (Lemma~\ref{lem:compactness}) which gives the framework for our result, and is based on a recent  compactness result for sets of bounded perimeter due to Frank and Lieb \cite{FrLi2015} which 
can conveniently be used to decompose the support of a minimizing sequence into sets of diameter $O(\eta)$. In Section~\ref{sec:regularity} we use the regularity of minimizers to refine the concentration lemma and obtain a lower bound with a precise error estimate of order $o(\eta^{2/3})$, needed to isolate the interaction terms in the energy.    In Section~\ref{sec:proof_of_conv} we prove Theorem~\ref{thm:minimizers} by matching upper and lower bounds on the energy. Finally in Section~\ref{sec:degenerate_max} we discuss second-order approximations for degenerate confining penalization densities $\rho$ which have degenerate maxima at a single point.

\section{The Concentration Structure of Minimizers}\label{sec:second_ord}

  We begin by  defining the first-order limit functional,  
\[ 
		\E_0(v) \, := \, \begin{dcases*}
									\sum_{i=1}^{\infty} e_0(m^i)\, -\, m^i\rho(x_i) & \quad if $v=\sum_{i=1}^\infty m^i\delta_{x_i}$, $\{x_i\}$ distinct, and $m^i \geq 0$, \\
									+\infty								& \quad otherwise, 
							 \end{dcases*}
\]
where $e_0$ is defined by \eqref{eqn:nlip}. 
The following first-order Gamma convergence result follows directly from \cite{ChPe2010}, since the confinement term is a continuous perturbation of the functional in  \cite[Theorem 4.3]{ChPe2010}.
\bthm[First-order limit]\label{thm:first_ord_limit}
		The sequence of functionals $\E_\eta$ $\Gamma$-converge to $\E_0$ as $\eta \to 0$ in the space of Radon measures with respect to the weak-* convergence. 
	\ethm

\begin{remark}\label{rem:first_ord_general_rho}
	Theorem \ref{thm:first_ord_limit} holds true for any confinement term defined via a penalizing measure $\rho \in C(\RRR)$ and is not special to those which satisfy (H1)--(H3).
\end{remark}

\begin{remark}
A standard consequence of $\Gamma$-convergence, combined with the compactness of a sequence of minimizers, is that minimizers of $\E_\eta$ converge weakly in the space of Radon measures to a minimizer of the limiting energy $\E_0$. Hence, using the simple fact that $e_0(M) \leq \sum_{i=1}^\infty e_0(m^i)$ (cf. \cite{ChPe2010}) we may conclude that for any family of mass-constrained minimizers of $\E_\eta$,
		\beqn \label{eqn:min_conv_min}
	v_\eta\,  \wlimit \,  M  \delta_0  \qquad {\rm and} \qquad 		\lim_{\eta\to 0} \E_\eta(v_\eta) \,= \, e_0(M) - M\rho_{\max}
		\eeqn
	where $\rho_{\max}:=\rho(0)$.
\end{remark}

\medskip

As Theorem \ref{thm:first_ord_limit} shows the first-order approximation does not include the nonlocal non-self interaction effects of the energy $\E_\eta$. To capture these effects we will look at the second-order approximation. This limit will depend on the specific form of the penalizing measure $\rho$ as we will normalize the energy $\E_\eta$ by subtracting the ground state of the first-order limit $\E_0$. We just noted that as $\eta \to 0$, mass constrained minimizers of $\E_\eta$ concentrate at the origin.  How  the sequence  collapses  to the origin depends on the size of the mass constraint $M$. 
When $M$ is sufficiently large so that a minimizer of $e_0$ fails to exist, we show that minimizers of $\E_\eta$ split at a scale larger than $\eta$ as $\eta\to 0$, while for small enough $M$ we show that no splitting may occur. 

To this end, recall the notation,

	\[
		\Izero \,:=\, \{ M>0 \colon e_0(M) \text{ admits a minimizer}\}.
	\]
It is clear from  Theorem \ref{thm:e0} that $\Izero$ is not empty. Moreover, $M<m_{c_1}$ is a sufficient condition for $M\in\Izero$ and $M>m_{c_2}$ is sufficient for $M \not\in \Izero$; however, the necessity of either condition is an open problem.

\bigskip

As a first illustration of the role played by $\Izero$, we state an ensemble of upper bounds on the minimum energy of $\E_\eta$:
\begin{lemma}\label{lem:upper}
For any $n\in\N$, $\{p_i\}_{i=1}^n$ distinct fixed points in $\R^3$, and $\{m^i\}_{i=1}^n\in\mathcal{M}$, we have
\beqn\label{eq:upper2}
\min_{\int_{\TTT} v =M}  \E_\eta(v)  
\leq \left(e_0(M)-M\rho_{\max}\right) + \eta^{2/3}\left[
\sum_{i=1}^n m^i q(p_i) + \frac{1}{4\pi}\sum_{\substack{i,j=1\\ i\neq j}}^n \frac{m^i\,m^j}{|p_i-p_j|}\right] + o(\eta^{2/3}).
\eeqn
Moreover, if $M\in\Izero$, then
\beqn\label{eq:upper1}
\min_{\int_{\TTT} v \,dx=M}  \E_\eta(v) \leq \left(e_0(M)-M\rho_{\max}\right) + \mathcal{O}(\eta).
\eeqn
\end{lemma}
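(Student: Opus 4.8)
The plan is to prove both inequalities by exhibiting explicit admissible competitors and estimating $\E_\eta$ on them; since $\min_{\int v=M}\E_\eta\le\E_\eta(v)$ for any admissible $v$, this suffices. Fix $\{m^i\}_{i=1}^n\in\mathcal M$ and distinct points $p_1,\dots,p_n\in\R^3$, let $w_i\in BV(\R^3,\{0,1\})$ be minimizers of $e_0(m^i)$ (they exist since $m^i\in\Izero$), and recall (as noted after Theorem~\ref{thm:minimizers}) that each $w_i$ has connected compact support, say inside $B_\Lambda(0)$. Put $\delta:=\eta^{1/3}$ and define
\[
 \nu_\eta(x):=\sum_{i=1}^n\eta^{-3}\,w_i\!\left(\frac{x-\delta p_i}{\eta}\right).
\]
Because $\eta\ll\delta\to0$, for all small $\eta$ the sets $\delta p_i+\eta\,\supp w_i$ are pairwise disjoint and contained in any prescribed ball $B_r(0)$; hence $\nu_\eta\in BV(\TTT,\{0,\eta^{-3}\})$ with $\int_\TTT\nu_\eta=\sum_i m^i=M$, so $\nu_\eta$ is admissible.

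Next I would expand $\E_\eta(\nu_\eta)$ term by term. By the scaling of the total variation and disjointness of supports, the perimeter term equals exactly $\eta\int_\TTT|\nabla\nu_\eta|=\sum_{i=1}^n\int_{\R^3}|\nabla w_i|$. For the nonlocal term, write the mean-zero Green's function as $G(x,y)=\tfrac1{4\pi|x-y|}+g(x,y)$ with $g$ bounded near the diagonal, expand $\|\nu_\eta-M\|_{H^{-1}(\TTT)}^2=\iint_{\TTT\times\TTT}G\,\nu_\eta(x)\nu_\eta(y)$, and separate the diagonal ($i=j$) from the off-diagonal ($i\ne j$) contributions. Rescaling $x=\delta p_i+\eta\xi$, $y=\delta p_j+\eta\zeta$: the diagonal part, after multiplying by $\eta$, gives $\sum_i\|w_i\|_{H^{-1}(\R^3)}^2+O(\eta)$, while in the off-diagonal part $|x-y|=\delta|p_i-p_j|+O(\eta)$, so (since $\eta/\delta=\eta^{2/3}$) it gives $\eta^{2/3}\tfrac1{4\pi}\sum_{i\ne j}\tfrac{m^im^j}{|p_i-p_j|}+o(\eta^{2/3})$. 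For the confinement term, (H3) gives, uniformly for $\xi\in\supp w_i$, $\rho(\delta p_i+\eta\xi)=\rho_{\max}-q(\delta p_i+\eta\xi)+o(|\delta p_i+\eta\xi|^2)=\rho_{\max}-\delta^2q(p_i)+o(\delta^2)$, the mixed and $O(\eta^2)$ contributions to $q$ being $o(\eta^{2/3})$; hence $-\int_\TTT\nu_\eta\rho=-M\rho_{\max}+\eta^{2/3}\sum_i m^iq(p_i)+o(\eta^{2/3})$.

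Adding the three estimates and using $\int_{\R^3}|\nabla w_i|+\|w_i\|_{H^{-1}(\R^3)}^2=e_0(m^i)$ yields
\[
 \E_\eta(\nu_\eta)=\sum_{i=1}^n e_0(m^i)-M\rho_{\max}+\eta^{2/3}\Big[\sum_{i=1}^n m^iq(p_i)+\tfrac1{4\pi}\sum_{i\ne j}\tfrac{m^im^j}{|p_i-p_j|}\Big]+o(\eta^{2/3}),
\]
which, taking $\{m^i\}$ to realize $e_0(M)=\min\{\sum_i e_0(m^i):\{m^i\}\in\mathcal M\}$ (a minimum attained by a finite decomposition, by the known structure of minimizing sequences for $e_0$), gives \eqref{eq:upper2}. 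For \eqref{eq:upper1} one specializes to $n=1$, $m^1=M\in\Izero$, $p_1=0$, $w_1=z_M$ a minimizer of $e_0(M)$: there is no off-diagonal interaction, $q(0)=0$, and since $\rho$ has a $C^2$ maximum at the origin, $\rho(\eta\xi)=\rho_{\max}+O(\eta^2)$, so the confinement error is $O(\eta^2)$; combined with the $O(\eta)$ error from the diagonal nonlocal term this gives $\E_\eta(\nu_\eta)=e_0(M)-M\rho_{\max}+O(\eta)$.

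The only delicate point is the bookkeeping in the nonlocal term: one must verify that the regular part $g$ of the torus Green's function contributes at most $O(\eta)$ — and, crucially, not at order $\eta^{2/3}$ — and that every error term is $o(\eta^{2/3})$ uniformly over the (compactly supported) profiles $w_i$. It is also worth recording that $\delta=\eta^{1/3}$ is the unique scaling balancing the repulsive contribution $\eta/\delta$ against the confining contribution $\delta^2$ at the common order $\eta^{2/3}$; any other rate produces a strictly worse bound. Since this is the ``easy'', ansatz-based direction, no compactness or regularity obstruction arises here — those enter only in the matching lower bound.
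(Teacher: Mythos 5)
Your proof is correct and follows essentially the same route as the paper: the same ansatz $\nu_\eta=\sum_i\eta^{-3}w_i\bigl((x-\eta^{1/3}p_i)/\eta\bigr)$, the same decomposition of the torus Green's function separating the diagonal (contributing $e_0(m^i)$ plus an $O(\eta)$ error from the regular part) from the off-diagonal terms (contributing the $\eta^{2/3}$-order Coulomb interaction), and the same Taylor expansion of $\rho$ via (H3), with the identical $n=1$ specialization for \eqref{eq:upper1}. Your observation that the construction naturally produces $\sum_i e_0(m^i)$ rather than $e_0(M)$ as the leading constant is well taken; the paper's computation has the same feature, and the lemma is only ever invoked for decompositions satisfying $e_0(M)=\sum_i e_0(m^i)$ as in \eqref{eqn:e0_linearity} of Lemma~\ref{lem:compactness}.
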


Thus, if $M\in\Izero$ the upper bound on the energy is vastly improved.  In fact, we expect that fewer droplets (smaller $n$) generally should yield a smaller upper bound, and so energy minimization should split the mass $M$ into the smallest number of pieces which permit $\{m^i\}_{i=1}^n\in\mathcal{M}$.  However, we are not aware of any such results concerning the minimization of $\F_{m^1,\dots,m^n}$ with respect to the masses $m^i$.

The proof of the upper bound follows along the lines of the heuristic argument presented in the Introduction, and we defer it to Section~\ref{sec:proof_of_conv}.

\medskip

As usual, the difficulty in verifying the desired structure of energy minimizers lies mostly in developing a matching lower bound on the energy.  The first and crucial step is obtaining a compactness result which establishes the existence of points in $\TTT$ which are separated by a scale much larger than $\eta$ apart, so that weighted Dirac-delta measures at these points approximate $v_\eta$. This result also gives the existence of components of $\supp v_\eta$ whose supports are $\eta$-rescalings of minimizers of the nonlocal isoperimetric problem, and thus achieve the minimum of the first-order energy $\E_0$. In addition, we establish that there can only be finitely many distinct components for minimizers of $\E_\eta$, and in case there is a unique component we obtain a stronger convergence result:

\blemma \label{lem:compactness}
	For each $\eta>0$ let $v_\eta$ be a minimizer of $\E_\eta$ with $\int_{\TTT} v_\eta\,dx=M$.  Then there exists a subsequence $\eta\to 0$, $n\in\N$, $\{m^i\}_{i=1}^n\subset (0,\infty)$, $\{ x_\eta^i \}_{i=1}^n \subset \TTT$ and functions $w_\eta^i \in BV(\TTT;\{0,1/\eta^3\})$ with $\|w_\eta^i\|_{L^1(\TTT)}=m^i+o(1)$ as $\eta\to 0$ such that for $n \geq 2$
\begin{gather}  
		{|x_\eta^i - x_\eta^j| \over\eta}\to\infty \text{ for every } i \neq j,\\
		\left\|v_\eta - \sum_{i=1}^n w_\eta^i \right\|_{L^1(\TTT)}\to 0 \\ 
		e_0(m^i) \text{ is attained for each }i=1,2,\ldots,n, \quad \text{ and }\quad e_0 (M) = \sum_{i=1}^n e_0 (m^i),  \label{eqn:e0_linearity}
\\ \label{eqn:o1lower}		
\liminf_{\eta\to 0}\E_\eta(v_\eta) \geq \liminf_{\eta\to 0} \sum_{i=1}^n \E_\eta \left( w_\eta^i \right)\geq  \sum_{i=1}^n e_0(m^i) - M\rho_{\max}. 
		\end{gather}
Moreover, if $n=1$, then $M\in\Izero$, and there exist points  $x_\eta\in\TTT$ such that 
\beqn\label{eq:one}
v_\eta-\eta^{-3} z_M\left({x-x_\eta\over \eta}\right) \longrightarrow 0\quad \text{in $L^1(\TTT)$ as $\eta\to 0$,}
\eeqn
 where $z_M$ attains the minimum $e_0(M)$.
\elemma

We remark that the last part of the lemma already indicates that splitting is only to be expected when the $e_0(M)$ problem has no minimizer.  Indeed, the second-order behavior is very different depending on whether $M\in\Izero$ or not.  However, it will be more convenient to use the dichotomy suggested by Lemma~\ref{lem:compactness} and separate the cases $n=1$ and $n\geq 2$ determined above; the full connection to $\Izero$ will only be apparent later on.

\medskip

The proof of Lemma~\ref{lem:compactness} is based on blowing up the minimizing sets $A_\eta=\supp v_\eta$ at order $\eta$.  When we blow up the minimizing set $A_\eta$ we are effectively thinking of it as a subset of $\RRR$, and are able to utilize a technical concentration-compactness result for sets of finite perimeter by Frank and Lieb \cite{FrLi2015}, but this can only work locally.  As the balls $B_r(x_0)\subset\TTT$ with radius $0<r<\frac12$ lift isometrically to $\RRR$, blowup makes sense for sets defined in these balls.   For $x_0\in\TTT$ and $0<r<\frac12$, we may thus define a family of diffeomorphisms,
$$  \left\{\begin{gathered}
  \Phi_{\eta,r}(\cdot;x_0): \ B_r(x_0)\subset\TTT \to B_{r\over\eta}(0)\subset\RRR,  \\
  x\mapsto \Phi_{\eta,r}(x;x_0)= {x-x_0\over\eta},
  \end{gathered}\right.
$$
where the last quantity is lifted to the covering space $\RRR$.  Thus, the map $\Phi_{\eta,r}(x;x_0)$ performs a blowup of $B_r(x_0)$, viewed as a subset of $\RRR$.  For simplicity, we will often abuse notation and write $\Phi_{\eta,r}(S;x_0)= \eta^{-1}(S-x_0)$ for the blowup to $\RRR$ of a set $S\subset B_r(x_0)\subset\TTT$, and for $U\in B_{{r\over\eta}}(0)\subset\RRR$, we write
$\eta U + x_0=\Phi^{-1}_{\eta,r}(U;x_0)\subset\TTT$.


\begin{proof}[Proof of Lemma \ref{lem:compactness}]
For any $\eta>0$ let $v_\eta\in BV(\TTT;\{0,1/\eta^3\})$ with $\int_{\TTT}v_\eta\,dx=M$ be a minimizer of  $\E_\eta$. Such a minimizer exists by the direct method in the calculus of variations.  By the first-order limit, Theorem~\ref{thm:first_ord_limit}, the energy is bounded above, and we have that $\E_\eta(v_\eta)\leq e_0(M)-M\rho_{\max} + o(1)$.

\noindent \textit{\textbf{Step 1.}}  We restrict to a neighborhood of zero. \  
Write 
$v_\eta = \eta^{-3}\chi_{A_\eta}$, with $A_\eta\subset\TTT$.  Since by (H3) zero is a strict local maximum of $\rho$, and thus we may find $\eps>0$ and $\delta\in (0,\frac18)$ so that 
$$\rho(x)<\rho_{\max}-\eps \quad \text{ for all $x\in\TTT\setminus B_\delta(0)$.}
$$
We now claim that for this $\delta$,  $\eta^{-3}|A_\eta\setminus B_\delta(0)|\to 0$.  Indeed, assume the contrary, so there is a value $>0$ such that along some subsequence $\eta\to 0$, $\eta^{-3}|A_\eta\setminus B_\delta(0)|\geq>0$. However, in that case,
\begin{align*}
\int_{\TTT} \rho\, v_\eta\, dx & = \eta^{-3} \int_{A_\eta\cap B_\delta(0)} \rho\, dx
   +  \eta^{-3}\int_{A_\eta\setminus B_\delta(0)} \rho\, dx \\
   &\leq \rho_{\max}\eta^{-3} |A_\eta\cap B_\delta(0)| 
     + (\rho_{\max}-\eps) \eta^{-3}|A_\eta\setminus B_\delta(0)| \\
     &\leq \rho_{\max}M - \eps <\rho_{\max}M.
\end{align*}
Using the lower bound from \cite{ChPe2010} on the first two terms in the energy  together with the upper bound from Theorem~\ref{thm:first_ord_limit},
$$  e_0(M) - M\rho_{\max} \geq \lim_{\eta\to 0} \E_\eta(v_\eta) \geq e_0(M) - \rho_{\max}M + \eps,  $$
which gives the desired contradiction.  

We next show we make only a very small error in the energy of minimizers by restricting to a ball around the origin.  Indeed, following the proof of Lemma~2.2 of \cite{FrLi2016}, for each $\eta$ there exists a radius $\delta_\eta\in (2\delta,3\delta)$ so that the surface area of the intersection,
$$   \mathcal{H}^2(\partial B_{\delta_\eta}(0)\cap A_\eta) \longrightarrow 0.  $$
(The argument involves noting that $|A_\eta\cap (B_{3\delta}\setminus B_{2\delta})|\to 0$ and using Fubini's theorem in spherical coordinates.)  Define the sets
$$  \Om_\eta := A_\eta\cap B_{\delta_\eta}(0), \qquad  \Xi_\eta:= A_\eta\setminus B_{\delta_\eta}(0),  $$
and functions
$$   v_\eta^0:= \eta^{-3}\chi_{\Om_\eta}.  $$
Then, we may conclude that
\begin{align*}
 \int_{\TTT} |\nabla \chi_{A_\eta}| &\leq \int_{\TTT} |\nabla\chi_{\Om_\eta}| + \int_{\TTT} |\nabla\chi_{\Xi_\eta}| \\
 &\leq \int_{\TTT} |\nabla\chi_{A_\eta}| + 
       2 \mathcal{H}^2(\partial B_{\delta_\eta}(0)\cap A_\eta)+o(1)\leq \int_{\TTT} |\nabla\chi_{A_\eta}| + o(1),  
\end{align*}
and therefore
$$   \liminf_{\eta\to 0} \int_{\TTT}|\nabla\chi_{A_\eta}| \geq \liminf_{\eta\to 0} \int_{\TTT}|\nabla\chi_{\Om_\eta}|,\quad\text{and}\quad
          \lim_{\eta\to 0} \left[\int_{\TTT} |\nabla\chi_{A_\eta}| - \int_{\TTT} |\nabla\chi_{\Om_\eta}| - \int_{\TTT} |\nabla\chi_{\Xi_\eta}|\right]=0.
$$   
Furthermore, $\eta^{-3}|\Xi_\eta|\to 0$ (globally on $\TTT$) implies that 
$\| v_\eta-v^0_\eta\|_{L^1(\TTT)}\to 0$, and 
$$  \int_{\TTT}\int_{\TTT} G(x,y)\, v_\eta(x)\, v_\eta(y)\, dxdy =
\eta^{-6}\int_{A_\eta}\int_{A_\eta} G(x,y)  \, dxdy =
\eta^{-6}  \int_{\Om_\eta}\int_{\Om_\eta} G(x,y)  \, dxdy
 + o(1), $$ 
and
$$  \int_{\TTT} v_\eta\rho\, dx = \eta^{-3}\int_{A_\eta} \rho\, dx = \eta^{-3}\int_{\Om_\eta} \rho\, dx +o(1)=\int_{\TTT} v_\eta^0\rho\, dx +o(1),  $$
and hence we have
\beqn\label{eq:Mo1}
 \E_\eta(v_\eta) \geq \E_\eta(v_\eta^0) + o(1), 
\quad\text{and \quad }  \int_{\TTT} v_\eta^0\,dx = \eta^{-3}|\Om_\eta|=M-o(1).
\eeqn
Effectively, we may consider $v^0_\eta$ to be our minimizing sequence, with support in $B_{3\delta}(0)\subset\TTT$ (although it only satisfies the mass constraint to within $o(1)$.)  However, it will often be convenient to treat the  truncated support $\Om_\eta\subset B_{3\delta}(0)\subset\RRR$ as belonging to the Euclidean space $\RRR$.

\medskip

\noindent \textit{\textbf{Step 2.}}  Finding a concentrating set.  We blow up on the scale of $\eta$, to obtain 
	\[
		\trns{\Om}_\eta \,:=\, \eta^{-1}\Om_\eta=\Phi_{\eta,3\delta}(\Om_\eta;0)\subset\RRR.
	\]
We note that in fact $\trns{\Om}_\eta\subset B_{3\delta\over\eta}(0)$, and  by \eqref{eq:Mo1},
 $|\trns{\Om}_\eta|=\eta^{-3}|\Om_\eta|=M-o(1)$.
For a set $\widehat\Om\subset\RRR$, we define the energy
$$\cG(\widehat\Om):= \int_{\RRR}|\nabla\chi_{\widehat\Om}| 
    + \int_{\widehat\Om}\int_{\widehat\Om} {1\over 4\pi |x-y|} \, dxdy,
$$
corresponding to the Liquid Drop model (see cf. \cite{KnMu2014,FrLi2015} or \eqref{eqn:nlip}).  For $x,y\in B_{3\delta}(0)\subset\TTT$ we may express the Green's function on the torus locally,
\beqn\label{eq:Green1}
		G(x,y) = \frac{1}{4\pi|x-y|} + g(x-y),
\eeqn
in terms of the Newtonian potential and the regular part $g \in  C^{\infty}(B_{\frac38}(0))$, and  observe that the contribution to $\E_\eta(v^0_\eta)$ due to the regular part is of the order $O(\eta)$:
\begin{align*}
  \eta\int_{\TTT}\int_{\TTT} v^0_\eta(x)\, v^0_\eta(y)\, G(x,y)\, dxdy
    &= \eta \int_{\TTT}\int_{\TTT} \left[\frac{1}{4\pi|x-y|} + g(x-y)\right] 
            v^0_\eta(x)\, v^0_\eta(y)\, dxdy  \\
       &= \eta^{-5} \int_{\Om_\eta}\int_{\Om_\eta} \frac{1}{4\pi|x-y|} \, dxdy + \mathcal{O}(\eta) \\
       &= \int_{\trns{\Om}_\eta}\int_{\trns{\Om}_\eta} \frac{1}{4\pi|x-y|} \, dxdy + \mathcal{O}(\eta)
\end{align*}
Consequently, we may relate the energies before and after blowup via
\beqn\label{eqn:energyeq}
\E_\eta(v^0_\eta) = \cG(\widehat\Om_\eta) - \int_{\TTT} v^0_\eta\, \rho\, dx + \bigO(\eta).
\eeqn
To prove the lemma we employ concentration-compactness methods to show that, if $\widehat\Om_\eta$ is not tight, then it must split into a finite number of components, each of which minimizes the blowup energy $\cG$.  To locate a first component, we observe that $|\trns{\Om}_\eta|=M-o(1)$ for all $\eta>0$, and apply Proposition~2.1 of \cite{FrLi2015} to obtain a set $\Om_1 \subset \RRR$ of positive measure and finite perimeter, and a sequence of points $\{y_\eta^1\}_{\eta>0} \subset \RRR$ such that
	\beqn \label{eq:LF1} \left\{
		\begin{gathered}
				\chi_{\trns{\Om}_\eta}(\ \cdot \, + y_\eta^1) \to \chi_{\Om_1} \text{ in } L^1_{\loc} \text{ as } \eta\to 0,\ \ 0<|\Om_1| \leq \liminf_{\eta\to 0}|\trns{\Om}_\eta|, \text{ and}\\
				\int_{\RRR} |\nabla \chi_{\Om_1}| \leq \liminf_{\eta\to 0} \int_{\RRR} |\nabla \chi_{\trns{\Om}_\eta}|.
		\end{gathered}\right.
	\eeqn

If it turns out that $|\Om_1|=M$, then we need go no further, and the lemma holds with $n=1$.  Indeed, let
	\[
		\tld{\Om}_\eta \,:=\, \trns{\Om}_\eta - y_\eta^1.
	\]
Since $\chi_{\tld{\Om}_\eta}\to \chi_{\Om_1}$ in $L^1_{\loc}(\RRR)$ and $\{\chi_{\tld{\Om}_\eta}\}_{\eta>0}$ is a tight sequence, in fact we have convergence globally in $L^1$ norm.  
 Then using the fact that $\tld{\Om}_\eta + y_\eta^1= \trns{\Om}_\eta$ and the translation invariance of the total variation and $H^{-1}$-norms, and Step 1, we conclude that
		\begin{align*}
		\cG(\Om_1) &\leq \liminf_{\eta\to 0} \cG(\widehat\Om_\eta) \\
		  &= \liminf_{\eta\to 0}\left[ \E_\eta(v^0_\eta) + \int_{\TTT} v^0_\eta\, \rho\, dx\right] 
		  \\
		  &\leq \liminf_{\eta\to 0}\left[ \E_\eta(v_\eta) + \int_{\TTT} v_\eta\, \rho\, dx\right] 
		  \\
		 &\leq e_0(M),
		\end{align*}
using the upper bound from the first order limit. Thus $e_0(M)$ is attained by $\chi_{\Om_1}$, and we may conclude that (with $r=3\delta$ and $x_\eta=\eta y_\eta^1 \in \TTT$,)
$$  \eta^3 v^0_\eta(\eta x+ x_\eta)\chi_{B_{r/\eta}(0)} = \chi_{\tld{\Omega}_\eta}(x) \to \chi_{\Omega_1} \quad\text{in $L^1_{\loc}(\R^3)$}. $$
The desired conclusion \eqref{eq:one} then follows by a change of variables from $\RRR$ to $B_r(0)\subset\TTT$, and $\eta^{-3}|\Xi_\eta|\to 0$ from Step 1.
\medskip

In the remainder of the proof we may therefore assume that $0<|\Omega_1|<M$.

\medskip

\noindent \textit{\textbf{Step 3.}} When $0<|\Omega_1|<M$ we expect there is no unique choice for the set $\Om_1$ and translations $y_\eta^1$ for which we obtain local convergence.  Following Lions \cite{Lions84} (see also Nam and van den Bosch \cite{NaBo17}) we define
$$  \M(\{\widehat\Om_\eta\}):= \sup\{ |\Om|: \ \text{there exist $y_\eta\in\RRR$ and $\Om\subset\RRR$ such that $\chi_{\widehat\Om_\eta-y_\eta}\to\chi_{\Om}$ in $L^1_{loc}(\RRR)$}\}.
$$
We may then choose $\Om_1$ and $y_\eta^1\in\RRR$ such that $|\Om_1|>\frac12 \M(\{\widehat\Om_\eta\})>0$, for which \eqref{eq:LF1} holds.
Define 
	\[
		m^1 \,:=\, |\Om_1| \qquad \text{ and } \qquad x_\eta^1 := \eta\, y_\eta^1. 
	\]
By Lemma~2.2 of \cite{FrLi2015} there exist $\{r_{1,\eta}\}_{\eta>0} \subset (0,\infty)$ such that the sets
	\[
		U_\eta^1 \,:=\, \tld{\Om}_\eta \cap B_{r_{1,\eta}}, \qquad V_\eta^1 \,:=\, \tld{\Om}_\eta \cap (\RRR \setminus \ol{B}_{r_{1,\eta}})
	\]
satisfy
	\beqn 
			\chi_{U_\eta^1} \to \chi_{\Om_1} \text{ in } L^1(\RRR),\qquad \chi_{V_\eta^1} \to 0 \text{ in } L^1_{\loc}(\RRR),\qquad \text{and}\qquad 
			m^1_\eta:= |U_\eta^1| \to |\Omega_1|=m^1.
			\nonumber
	\eeqn	
Let $\widehat\Om^1_\eta:= V^1_\eta+y^1_\eta$, with 
$|\widehat\Om^1_\eta|= |\widehat\Om_\eta|-m_\eta^1 = M-m^1 + o(1)\in (0,M)$.
Moreover, 
$$  \lim_{k\to\infty}\left(\int_{\RRR}|\nabla\chi_{\widetilde\Om_{\eta}}|-\int_{\RRR} |\nabla\chi_{U_\eta^1}| -\int_{\RRR}|\nabla\chi_{V_\eta^1}|\right)=0,
	\quad \text{ and }\quad \liminf_{\eta\to 0} \int_{\RRR} |\nabla\chi_{U_\eta^1}| \geq \int_{\RRR}|\nabla\chi_{\Om_1}|.
$$
Using Lemma~2.3 in \cite{FrLi2015}, the splitting also extends to the nonlocal energy,
 \[
	\int_{\trns{\Om}_\eta}\!\int_{\trns{\Om}_\eta} \frac{1}{|x-y|}\,dxdy 
	= \int_{U_\eta^1}\!\int_{U_\eta^1} \frac{1}{|x-y|}\,dxdy + 
	\int_{V_\eta^1}\!\int_{V_\eta^1} \frac{1}{|x-y|}\,dxdy + o(1)
		\]
	and
		\[
			\int_{U_\eta^1}\!\int_{U_\eta^1} \frac{1}{|x-y|}\,dxdy
			 = \int_{\Om_1}\!\int_{\Om_1} \frac{1}{|x-y|}\,dxdy + o(1).
		\]
Thus, the blowup energy splits up to an error of order $o(1)$,
\beqn\label{eq:firstsplit}
\cG(\widehat\Om_\eta) = \cG(U^1_\eta) + \cG(V^1_\eta) + o(1) = \cG(\Om_1) + \cG(\widehat\Om^1_\eta) + o(1).
\eeqn 
To return to the original scale, define the functions
	\begin{gather} \label{eq:udef}
	u_\eta^1(x) \,:=\, \frac{1}{\eta^3} \chi_{U_\eta^1} \left(\frac{x-x_\eta^1}{\eta}\right)
	=\frac{1}{\eta^3} \chi_{U_\eta^1} \left(\Phi_{\eta,\delta_\eta}(x;x_\eta^1)\right) , \quad
		w_\eta^1(x) \,:=\, \frac{1}{\eta^3} \chi_{\Om_1} \left(\frac{x-x_\eta^1}{\eta}\right), \\ v_\eta^1(x) \,:=\, \frac{1}{\eta^3} \chi_{V_\eta^1} \left(\frac{x-x_\eta^1}{\eta}\right)= \frac{1}{\eta^3} \chi_{\widehat\Om^1_\eta} \left(\frac{x}{\eta}\right).
		\nonumber
	\end{gather}
Then,  
$v^0_\eta \, = \, u_\eta^1 \,  + \, v_\eta^1$, each supported in $B_{3\delta}(0)\subset\TTT$, and
applying \eqref{eqn:energyeq} to \eqref{eq:firstsplit} we obtain the following splitting,
$$   \E_\eta(v^0_\eta) = \E_\eta(u_\eta^1) + \E_\eta(v_\eta^1) + o(1)\geq 
    \E_\eta(w_\eta^1) + \E_\eta(v_\eta^1) + o(1).  $$

\medskip

\noindent \textit{\textbf{Step 4.}}  We now repeat Step 3 for the remainder term $\{v_\eta^1\}_{\eta>0}$, with associated blowup set $\widehat\Om^1_\eta$, of mass $|\widehat\Om^1_\eta|=M-m^1_\eta=M-m^1+o(1)\in (0,M)$.  
Using the compactness results in \cite{FrLi2015} as in Step 3, we obtain translations $y^2_\eta\in\RRR$, a limiting set $\Omega_2$ with $|\Omega_2|\in (0,M-m^i]$ and $|\Omega_2|>\frac12\M(\{\widehat\Om^1_\eta\}),$ and a new remainder set $\widehat\Om^2_\eta$, for which the energy splits to within an error of $o(1)$.  We iterate this procedure to create sequences of translations, limiting sets, and remainder sets, as long as the limit sets do not exhaust the total mass $M$.  More specifically, given the remainder $v^{i-1}_\eta=\eta^{-3}\chi_{\widehat\Om^{i-1}_\eta}(x/\eta)$, 
we obtain translations $y^i_\eta\in\RRR$ and $\Om_i\subset\RRR$, with $|\Om_i|>\frac 12 \M(\{\widehat\Om^{i-1}_\eta\})$, 
and a disjoint decomposition
$(\widehat\Om^{i-1}_\eta-y^i_\eta)= U^i_\eta \cup V^i_\eta$, with 
$r_{i,\eta}\in (0,\infty)$, $U^i_\eta:=(\widehat\Om^{i-1}_\eta-y^i_\eta)\cap B_{r_{i,\eta}}$, 
$V^i_\eta:=(\widehat\Om^{i-1}_\eta-y^i_\eta)\setminus B_{r_{i,\eta}}$, and radii $r_{i,\eta}$ chosen such that
$\chi_{U^i_\eta}\to \chi_{\Om_i}$ in $L^1(\RRR)$ and $\chi_{V^i_\eta}\to 0$ in $L^1_{\loc}(\RRR)$.  We set 
\begin{gather*}
  m_\eta^i:=|U^i_\eta|\to |\Om_i|:=m^i, \quad \widehat\Om^i_\eta:=V^i_\eta+y^i_\eta,\quad
     x^i_\eta:= \eta y^i_\eta, \\
     u^i_\eta:= {1\over\eta^3}\chi_{U^i_\eta}\left({x-x^i_\eta\over\eta}\right), \quad
    w^i_\eta:= {1\over\eta^3}\chi_{\Om_i}\left({x-x^i_\eta\over\eta}\right), \quad
      v^i_\eta:= {1\over\eta^3}\chi_{\widehat\Om^i_\eta}\left({x\over\eta}\right).  
\end{gather*}
Then, as in Step 3, we may assert that for any $k\geq1$:
\begin{gather}
\label{i1}
|\Om_k|>\frac 12 \M(\{\widehat\Om^{k-1}_\eta\}); \\
\label{i2}
M=\sum_{i=1}^k m^i_\eta + |\widehat\Om^k_\eta| +o(1)= \sum_{i=1}^k m^i + |\widehat\Om^k_\eta| + o(1); \\
\label{i3}
\lim_{\eta\to 0} \left\| v_\eta - \sum_{i=1}^k w_\eta^i - v_\eta^k\right\|_{L^1(\TTT)}
   =0= \lim_{\eta\to 0} \left| \widehat\Om_\eta \triangle \left( \widehat\Om_\eta^k\cup \bigcup_{i=1}^k \Om_i\right) 
       \right| ;  \\
\label{i4}
\liminf_{\eta\to 0} \left[ \cG(\widehat\Om_\eta) - \sum_{i=1}^k \cG(\Om_i) - \cG(\widehat\Om^k_\eta)\right] \geq 0; \\
\label{i4b} 
\liminf_{\eta\to 0} \left[ \E_\eta(v_\eta) - \sum_{i=1}^k \E_\eta(w^i_\eta) - \E_\eta(v^k_\eta)\right] \geq 0.
\end{gather}  
Since for each $i<k$, $(U^k_\eta + (y^k_\eta-y^i_\eta))\subset V^i_\eta\to 0$ locally, but $\chi_{U^k_\eta}\to \chi_{\Om_k}$ in $L^1(\RRR)$, we must have $|y^i_\eta - y^k_\eta|\to\infty$, that is, 
$$  {|x^i_\eta-x^k_\eta|\over\eta}\to\infty, \qquad i\neq k
$$
Note that this countable process might stop after finitely many iterations:  If $|\Om_k| = M - \sum_{i=1}^{k-1} m^i$ after $k$ iterations, then as in Step 2 we have $\chi_{\Om_\eta^k}\to \chi_{\Om_k}$ in $L^1(\RRR)$ and we may take $V_\eta^i,\Om_i=\emptyset$ for all $i>k$.  As we shall see in Step 7, this is indeed the case. 

\noindent \textit{\textbf{Step 5.}}  The above process exhausts all the mass, and yields a lower bound on the energy.
Indeed, passing to the $k\to\infty$ limit in \eqref{i2} we observe that $\sum_{i=1}^\infty m^i<\infty$, and hence from \eqref{i1} we must have $\M(\{\widehat\Om^{k}_\eta\})<2m^k\to 0$ as $k\to\infty$.  By Proposition~2.1 of \cite{FrLi2015} we may then conclude that $|\widehat\Om^k_\eta|\to 0$, and so we have strong convergence,
$$    \lim_{\eta\to 0}\left\| v_\eta - \sum_{i=1}^\infty w_\eta^i\right\|_{L^1(\TTT)}
=\lim_{\eta\to 0} \left\| v^0_\eta - \sum_{i=1}^\infty w_\eta^i\right\|_{L^1(\TTT)}= 0.  $$
Finally, from \eqref{i2} and \eqref{i4} we may conclude
\beqn\label{i5}
\cG(\widehat\Om_\eta)\geq \sum_{i=1}^\infty \cG(\Om_i) + o(1), \qquad 
   M=\sum_{i=1}^\infty m^i.
\eeqn
Using \eqref{eqn:energyeq} and $\rho(x)\leq\rho_{\max}$, we may then conclude that
\beqn\label{i6}
\E_\eta(v_\eta)\geq\E_\eta(v^0_\eta)+o(1) = \cG(\widehat\Om_\eta) - \int_{\TTT}v^0_\eta\rho\, dx + o(1)
     \geq \sum_{i=1}^\infty \cG(\Om_i) - M\rho_{\max} + o(1).
     \eeqn

\medskip

\noindent \textit{\textbf{Step 6.}} The next assertion is that $e_0(m^i)$ is attained for each $i=1,2,\ldots$. Let $\Om_i$ be as above. Then combining the lower bound \eqref{i6} with \eqref{eqn:min_conv_min} and the fact that $e_0(M) \leq \sum_{i=1}^\infty e_0(m^i)$ we obtain
	\beqn \label{eqn:sum_of_e0_mi}
		\begin{aligned}
			\sum_{i=1}^\infty e_0(m^i) - M\rho_{\max} & \geq e_0(M) - M\rho_{\max} 			\geq \liminf_{\eta\to 0} \E_\eta (v_\eta)  \\ 									& \geq \sum_{i=1}^\infty \cG(\Om_i) - M \rho_{\max} 
		\geq \sum_{i=1}^\infty e_0(m^i) - M\rho_{\max}.
		\end{aligned}
	\eeqn
Adding $M\rho_{\max}$ to each term, and noting that every term in these infinite sums are positive, we conclude that $e_0(m^i)=\cG(\Om_i)$; since $|\Om_i|=m^i$, they attain the minimum value and each $\{m^i\}_{i\in\N}\in\mathcal{M}$.  Moreover, we note that
\beqn\label{i7}   e_0(M)=\sum_{i=1}^\infty e_0(m^i).  
\eeqn 

\bigskip

\noindent \textit{\textbf{Step 7.}}  There are only finitely many components. Let $m_{c_0}>0$ be the constant given in Theorem~\ref{thm:e0}. That is, $e_0$ is attained uniquely by a ball of volume $m$ for $m\leq m_{c_0}$. Then an explicit calculation shows that
	\[
		e_0(m) = \Bigg( 3(4\pi/3)^{1/3}\Bigg)\,m^{2/3} + \Bigg((3/4\pi)^{5/3} \int_{B_1(0)}\!\int_{B_1(0)} \frac{1}{4\pi|x-y|}\,dxdy\Bigg)\, m^{5/3}.
	\]
Computing the Coulomb term of a unit ball we see that $e_0^{\pr\pr}(m) < 0$ if $m<\trns{m}:=\min\{m_{c_0},2\pi\}$. Now suppose the first two terms $m^1$ and $m^2$ of the infinite sequence $\{m^i\}_{i=1}^\infty$ are in the interval $(0,\trns{m})$. Then
	\[
		\frac{d^2}{d\eps^2}\Bigg|_{\eps=0} \Bigg( e_0(m^1+\eps) + e_0(m^2-\eps) \Bigg) = e_0^{\pr\pr}(m^1) + e_0^{\pr\pr}(m^2) < 0.
	\]
However this contradicts the fact that the sequence $\{m^1,m^2,\ldots\}$ is optimal for \eqref{eqn:e0_linearity}. Therefore there is at most one $m^i$ on the interval $(0,\trns{m})$. Since $\sum_{i=1}^\infty m^i = M <+\infty$ the number of nonzero $m^i$ has to be finite.

This concludes the proof of Lemma~\ref{lem:compactness}.
\end{proof}

\bigskip

\begin{remark}   The same compactness result could be obtained within the context of Lions' concentration-compactness \cite{Lions84}, without recourse to the results of \cite{FrLi2015}.  Indeed, by defining a L\'{e}vy concentration function which is rescaled by $\eta$,
$$   Q_{v_\eta}(t) = \eta^3\sup_{x\in\TTT} \int_{B_{\eta t}(x)} v_\eta(y)\, dy  $$
we may proceed as in Lemma~I.1 of \cite{Lions84}.  However, Proposition~2.1 of \cite{FrLi2015} effectively eliminates the vanishing case, and the following Lemmas incorporate dichotomy and compactness in a very convenient form for BV spaces of characteristic functions.
\end{remark}

\begin{remark}\label{rem:CC} \ We observe that $\{v_\eta\}$ need not be absolute minimizers of $\E_\eta$:  the same conclusions may be drawn for any sequence $v_\eta=\eta^{-3}\chi_{A_\eta}$ with $\lim_{\eta\to 0}\int_{\TTT} v_\eta=M$, and assuming an upper bound of the form,
$$   \limsup_{\eta\to 0} \E_\eta(v_\eta) \leq e_0(M) - M\rho_{\max}.  $$
\end{remark}

As a last remark, we note that thanks to Steps 6 and 7, we obtain more detailed information on the sets $U_\eta^i, V_\eta^i$, $i=1,\dots,n$, which form the basis for the decomposition.  This more refined description of these sets will be useful in proving regularity of the minimizing sequences.
\blemma\label{lem:UV}
Let $U_\eta^i, V_\eta^i$ be as in the proof of Lemma~\ref{lem:compactness}.  There exists $R>0$, independent of $\eta$, such that:
$$  U_\eta^i\subset B_{2R}(0), \qquad V_\eta^i\cap B_R=\emptyset,\quad 
\text{and}\quad \chi_{U_\eta^i\setminus B_{R/2}(0)}\to 0 \ \text{in} \ L^1(\RRR).  
$$         
\elemma

\begin{proof}
By Step 6, $\Omega_i$ attains the minimum of $e_0(m^i)$, and hence is compact and connected.  By Step 7, there are only finitely many, so we may choose $R>0$ with $\Omega_i\subset B_{R/2}(0)$ for each $i=1,\dots,n$.  Returning to Steps 3 and 4, we may now choose the radii $r_{i,\eta}\in (R, 2R)$ (see the proof of \cite[Lemma 2.3]{FrLi2015},) which yields the desired result.
\end{proof}

\section{Regularity of minimizers of $\E_\eta$}\label{sec:regularity}

In determining the concentration structure of minimizers in Lemma~\ref{lem:compactness} we obtain a rough lower bound on the energy with a possible error of $o(1)$, which is enough to identify each component as a minimizer of the liquid drop energy (at length scale $\eta$), but it not sharp enough to compute the interaction between droplets.  We use the regularity theory of minimizers in order to refine this decomposition, and in particular the lower bound inequality \eqref{eqn:o1lower}, in order to obtain an optimal lower bound with no error term to compete with the interdroplet interaction energy (which we will see later is of order $\bigO(\eta^{2/3})$.)

We start with a definition which gives a property of perimeter functionals that, as we will show, provides a strong regularity result for minimizers $v_\eta$.

\begin{defi}[$\omega$-minimality] \label{def:omega_min}  Let $\mathcal{O}\subset\RRR$ be an open set, and $\omega>0$.  A set of finite perimeter $A\subset \RRR$ is an $\omega$-minimizer for the perimeter functional $\int_{\RRR}|\nabla \chi_A|$ in $\mathcal{O}$ if for any ball $B_r(x_0)\subset\mathcal{O}$ and any set of finite perimeter $B\subset\RRR$ such that $A \triangle B \subset\!\subset B_r(x_0)$ we have
	\[
	\int_{\mathcal{O}} |\nabla \chi_{A}| \leq \int_{\mathcal{O}} |\nabla \chi_B| + \omega r^3.
	\]
\end{defi}

Our goal in this section is to show that the minimizers $v_\eta$ of $\E_\eta$ blow up to $\omega$-minimizers of perimeter in $\RRR$, with $\omega$ independent of $\eta$.  
To state this result we need to blow up the minimizing set $A_\eta=\supp v_\eta$, truncated to a neighborhood of a point $p\in\TTT$, to a set in $\RRR$,
$$    \widetilde A_{p,\eta}:=\Phi_{\eta,\frac14}(A_\eta; p)=\eta^{-1}\left[(A_\eta-p)\cap B_{\frac14}(0)\right],  $$
so $\widetilde A_{p,\eta}\subset\RRR$ and centered at the origin.

\blemma \label{lem:omega_min}  
	There exists $\omega>0$, such that for any fixed $p\in\TTT$ and $R>0$, $\widetilde A_{p,\eta}$ is an $\omega$-minimizer of perimeter in $B_R(0)$, uniformly for all $\eta\in (0,{1\over 2R})$.
\elemma

We recall that $R$ is given in Lemma~\ref{lem:UV}, such that the limit sets $\Om_i\subset B_{R/2}$ for each $i$.

In proving Lemma~\ref{lem:omega_min} it is convenient to replace the mass constraint $\int_{\TTT}v_\eta = M$ with a penalization in the energy which effectively enforces this constraint, but allows us to choose comparison functions with arbitrary mass.  Following \cite{AcFuMo13}, for any $\lambda>0$ we define an unconstrained functional 
	\[
		\E_\eta^\lambda(u) \,:=\, \E_\eta (u) + \lambda \left| \int_{\TTT} u\,dx - M \right|
	\]
over $u\in BV(\TTT;\{0,1\})$. which penalizes deviations from the usual mass constraint.  Then we have:

\blemma\label{lem:penalized}
There exists constants $\eta_0,\ \lambda_0>0$ such that for every $0<\eta<\eta_0$
	\[
		\inf\big\{ \E_\eta^{\lambda_0}(u) \colon u\in BV(\TTT;\{0,1\})  \big\} =\E_\eta^{\lambda_0}(v_\eta)=\E_\eta(v_\eta).
	\]
\elemma

We first prove Lemma~\ref{lem:omega_min} assuming Lemma~\ref{lem:penalized}, whose proof follows immediately afterwards.

\begin{proof}[Proof of Lemma~\ref{lem:omega_min}]
Let $R>0$ be fixed, and take any $r>0$ with $B_r(x_0)\subset B_R(0)$, and $\tilde S\subset\RRR$ be any set satisfying 
$\tilde S \triangle \widetilde A_{p,\eta}\!\subset B_r(x_0)\subset B_R(0)$. 
We now pull back the set $\tilde S$ to $\TTT$ to construct a competitor $S_\eta\subset\TTT$ for the minimization problem for $\E^{\lambda_0}_\eta$ as follows:  inside $B_{\frac14}(p)$, we take $S_\eta$ with $S_\eta\cap B_{\frac14}(p)=\Phi^{-1}_{p,\eta}(\tilde S)$, and outside that neighborhood, $S_\eta\cap (B_{\frac14}(p))^c = A_\eta$.  Thus, $S_\eta$ coincides with $A_\eta$ except in the preimage
$\Phi^{-1}_{p,\eta}(B_r(x_0))\subset B_{\eta R}(p)$, that is
$$  S_\eta\triangle A_\eta \subset \Phi^{-1}_{p,\eta}(B_r(x_0))\subset B_{\eta R}(p),  $$
and by rescaling,
$$  |S_\eta\triangle A_\eta| = \eta^3 |\tilde S\triangle \widetilde A_{p,\eta}|.  $$

Define $\varphi_\eta:=\eta^{-3}\chi_{S_\eta}$  By Lemma~\ref{lem:penalized},  
$\E_\eta(v_\eta) = \E_\eta^{\lambda_0}(v_\eta) \leq \E_\eta^{\lambda_0}(\varphi_\eta)$.  Thus,
	\begin{align}\label{eq:um}
		\eta\int_{\TTT} |\nabla v_\eta| &\leq \eta\int_{\TTT} |\nabla \varphi_\eta| 
		+\eta\int_{\TTT}\int_{\TTT} \left[\varphi_\eta(x)\varphi_\eta(y)-v_\eta(x)v_\eta(y)\right]
		    G(x,y)\, dxdy  \\
		  &\qquad\quad+ \int_{\TTT} \rho(x)\left[v_\eta(x)-\varphi_\eta(x)\right]dx 
		  +\lambda_0\left|\int_{\TTT}\phi_\eta - M\right| . \nonumber
	\end{align}
We now estimate each term involving the difference $(\varphi_\eta-v_\eta)$, which we recall is supported in $B_{\eta R}(p)$.  For the nonlocal term, we must split the integral by decomposing $\TTT$ into $B:=B_{2\eta R}(p)$ and $B^c:=\TTT\setminus B_{2\eta R}(p)$.  When both $x,y\in B$, we may decompose the Green's function as in \eqref{eq:Green1}, and so
\begin{align*}
&\eta\int_B\int_B \left[\varphi_\eta(x)\varphi_\eta(y)-v_\eta(x)v_\eta(y)\right]
		    G(x,y)\, dxdy \\
	&\qquad= \eta\int_B\int_B \left[\varphi_\eta(x)\varphi_\eta(y)-v_\eta(x)v_\eta(y)\right]
		    \left( {1\over 4\pi |x-y|} + g(x,y)\right)\, dxdy \\
	&\qquad\leq  \int_{\tilde S} \int_{\tilde S} {1\over 4\pi |x-y|} \, dxdy -
	    \int_{\widetilde A_{p,\eta}} \int_{\widetilde A_{p,\eta}} {1\over 4\pi |x-y|} \, dxdy
	    + \eta^{-5}
	           \int_{B}\int_{S_\eta\triangle A_\eta} |g(x,y)| \, dxdy
	           \\
	 &\qquad\leq C_1 |\tilde S\triangle \widetilde A_{p,\eta}| + C_2\eta^{-5}|B_{2\eta R}(p)| \, |S_\eta\triangle A_\eta|
	    \leq (C_1+\bigO(\eta)) |\tilde S\triangle \widetilde A_{p,\eta}|,
\end{align*}
where the difference of the Newtonian potential terms is bounded using  \cite[Proposition 2.3]{BoCr14}.
For $x\in B^c$ but $y\in B$ (or vice-versa,) we recall that $\varphi_\eta(x)=v_\eta(x)$, and $S_\eta\triangle A_\eta\subset B_{\eta R}(p)$, and thus $\dist_{\TTT}(x,y)\geq R\eta$.  Since $G(x,y)$ satisfies a uniform estimate,
$$   |G(x,y)|\leq {C\over \dist_{\TTT}(x,y)} + C\,  $$
with constant independent of $x\neq y\in\TTT$, we have
\begin{align*}
&\left|\eta\int_B \int_{B^c} \left[\varphi_\eta(x)\varphi_\eta(y)-v_\eta(x)v_\eta(y)\right]
		    G(x,y)\, dxdy\right| \\
&\qquad = \left|\eta\int_B \int_{B^c} v_\eta(x)\left[\varphi_\eta(y)-v_\eta(y)\right] 
     G(x,y)\, dxdy\right|  \\
&\qquad  \leq \eta^{-2}  \int_{B^c}v_\eta(x)\left[ \int_{S_\eta\triangle A_\eta} C\left[{1\over \dist_{\TTT}(x,y)}+1\right] dy\right] dx \\
&\qquad \leq C_3 \eta^{-3} |S_\eta\triangle A_\eta| = C_3 |\tilde S \triangle \widetilde A_{p,\eta}|.
\end{align*}
Finally, if $x,y\in B^c$, then $\left[\varphi_\eta(x)\varphi_\eta(y)-v_\eta(x)v_\eta(y)\right]=0$, so the remaining integral vanishes identically, and we obtain the estimate,
\beqn\label{eq:nlterm}
  \eta\int_{\TTT}\int_{\TTT} \left[\varphi_\eta(x)\varphi_\eta(y)-v_\eta(x)v_\eta(y)\right]
		    G(x,y)\, dxdy \leq C_4 |\tilde S \triangle \widetilde A_{p,\eta}|.  
\eeqn

The remaining terms in the difference of the energies \eqref{eq:um} are easier to bound.  Indeed,
\begin{gather*}
  \int_{\TTT} \rho(x)\left[v_\eta(x)-\varphi_\eta(x)\right]dx \leq 
        \eta^{-3}\rho_{\max}|S_\eta\triangle A_\eta| = \rho_{\max}|\tilde S \triangle \widetilde A_{p,\eta}|, \\
        \text{and}\quad
        \lambda_0\left|\int_{\TTT}\phi_\eta - M\right| = \lambda_0\eta^{-3}\bigl|\, |S_\eta| - |A_\eta|\, \bigr|
          \leq  \lambda_0 |\tilde S \triangle \widetilde A_{p,\eta}|.
\end{gather*}
Since $\tilde S \triangle \widetilde A_{p,\eta}\subset B_r(x_0)\subset B_R(0):=\mathcal{O}$, from the above estimates applied to \eqref{eq:um} we have:
\begin{align*}
\int_{\mathcal{O}} |\nabla \chi_{\widetilde A_{p,\eta}}| - \int_{\mathcal{O}} |\nabla \chi_{\tilde S}|
&= \eta \int_{B_{\eta R}(p)} |\nabla v_\eta| -  \eta \int_{B_{\eta R}(p)} |\nabla \varphi_\eta| \\
&= \eta \int_{\TTT}\left( |\nabla v_\eta| - |\nabla \varphi_\eta|\right)  \\
&\leq C_5 |\tilde S \triangle \widetilde A_{p,\eta}|\leq \omega r^3,
\end{align*}
with constant $C_5=(C_4+\rho_{\max}+\lambda_0)$ and $\omega=C_5 |B_1|$, independent of $\eta>0$.
\end{proof}

\begin{proof}[Proof of Lemma~\ref{lem:penalized}] We observe that by \cite[Proposition 2.7]{AcFuMo13}, we may assume the existence of $\lambda=\lambda(\eta)$ for which the minimizer of the penalized problem coincides with the minimizer of the constrained problem, and so it suffices to show that $\lambda=\lambda_0$ may be chosen independently of $\eta$. 

Suppose that the conclusion of the lemma is not true. That is, there exist sequences $\eta\to 0$, $\lambda_\eta\to\infty$, and  $\varphi_\eta=\eta^{-3}\chi_{S_\eta}$ such that $\eta^{-3}|S_\eta|\neq M$ and
	\[
		\E_{\eta}^{\lambda_\eta}(\varphi_\eta) < \E_{\eta}^{\lambda_\eta}(v_\eta)
		 = \E_{\eta}(v_\eta).
	\]
Since $\lambda_\eta\to\infty$, we have that $\eta^{-3}|S_\eta|\to M$.  By the above upper bound and Remark~\ref{rem:CC}, Lemma~\ref{lem:compactness} applies to the family $\{\varphi_\eta\}$, and we return to the notation established there.  By Step~1, we may decompose $S_\eta=\Om_\eta\cup \Xi_\eta$ disjointly, with $\Om_\eta\subset B_{3\delta}(0)$ and $\eta^{-3}|\Xi_\eta|\to 0$.  
By Step 3 there exist translations $x_\eta^1=\eta y_\eta^1$ and disjoint sets $U_\eta^1,V_\eta^1$ so that after blowing up $\hat\Om_\eta:=\eta^{-1}(\Om_\eta-x_\eta^1)$ to $\RRR$,
$$   \hat\Om_\eta= U_\eta^1\cup V_\eta^1, \quad \chi_{U_\eta^1}\to\chi_{\Om_1} \ \text{in} \ L^1(\RRR), \quad
  \chi_{V_\eta^1}\to 0 \ \text{in} \ L^1_{\loc}(\RRR).  $$
Moreover, $|U^1_\eta|\to m^1=|\Om_1|$.  By Lemma~\ref{lem:UV} following the proof of Lemma~\ref{lem:compactness}, there exists $R>0$ so that $\Om_1\subset\!\subset B_{R/2}(0)$, $U^1_\eta\subset B_{2R}(0)$, 
$V^1_\eta\cap B_{R}(0)=\emptyset$, and $\chi_{U^1_\eta\setminus B_R(0)}\to 0$ in $L^1(\RRR)$.

Let $\mu_\eta:=M-|V_\eta^1|-\eta^{-3}|\Xi_\eta|$.  Then, $\mu_\eta= m^1_\eta +o(1)=m^1+o(1)$.  The goal is to deform the sets $U^1_\eta$ via a family of dilations such that the resulting set $\tilde U^1_\eta$ has measure exactly equal to $\mu_\eta$.  Then, we may scale back down to $\TTT$, substituting the new set $(\eta \tilde U^1_\eta + x^1_\eta)$ inside $B_{2\eta R}(x^1_\eta)$, and obtain a set with mass exactly equal to $\eta^{-3}M$, and eventually obtain a contradiction.  To that end, we define a smooth, non-increasing cut-off function of $r\geq 0$ with $0\leq \phi(r)\leq 1$, $\phi(r)=1$ for $0\leq r\leq R/2$, and $\phi(r)=0$ when $r\geq R$.  Then, we let $\Psi_t: \ \RRR\to\RRR$ be the flow map associated to the dynamical system,
$$  \mathbf{x}'(t) = \mathbf{F}(\mathbf{x}), \quad 
   \mathbf{F}(\mathbf{x})= \phi(|\mathbf{x}|)\mathbf{x}, \quad \mathbf{x}(0)=x.  $$
The right-hand side is smooth with linear growth, and so $\Psi_t$ maps $B_R(0)$ to itself for any $t\in\R$, and $\Psi_t(x)=x$ for all $x\notin B_R(0)$.  For any set $S$ of finite perimeter we define $S(t):=\Psi_t(S)$.  Then, the measure $|S(t)|$ is a differentiable function of $t$, and 
$${d\over dt}|S(t)|=\int_{S(t)}\text{div}\,\mathbf{F}\, dx 
   = \int_{S(t)} \left(3\phi(|x|)+\phi'(|x|)|x|\right)dx .
$$
As $\Om_1\subset\!\subset B_{R/2}$, there exist constants $\eps,C>0$ so that 
$${d\over dt}|\Om_1(t)|\geq 3|\Om_1(t)|\geq 2C>0, \quad\text{for all $t\in (-\eps,\eps)$.}  $$
Since $U^1_\eta\to \Om_1$ globally, we have ${d\over dt}|U^1_\eta(t)|\geq C>0$ for all
$t\in (-\eps,\eps)$, for all $\eta>0$ sufficiently small.  Recalling that $|\mu_\eta-m^1_\eta|\to 0$, we may conclude that for each sufficiently small $\eta>0$, there exists a unique $t_\eta\in (-\eps,\eps)$ so that $\tilde U^1_\eta:=U^1_\eta(t_\eta)$ has measure
$|\tilde U^1_\eta|=\mu_\eta$.  Moreover, $t_\eta=\bigO(\mu_\eta-m^1_\eta)$.  Using the expansion of the Jacobian (see \cite[Theorems 17.5,17.8]{Maggi},) we also have
\beqn\label{eq:tt}
\int_{\RRR}  |\nabla\chi_{\tilde U^1_\eta}| \leq \int_{\RRR} |\nabla\chi_{ U^1_\eta}| + Ct_\eta, \qquad 
   | \tilde U^1_\eta\triangle U^1_\eta| \leq Ct_\eta,
\eeqn
with constant $C$ depending on the perimeter of $U^1_\eta$, but independent of $\eta$.  Finally, we observe that since $\Psi_t(x)=x$ for $x\notin B_R(0)$, $\tilde U^1_\eta\cap V^1_\eta=\emptyset$.

Now we switch $\tilde U^1_\eta$ for $U^1_\eta$ and scale back down to $\TTT$.  Let $\tilde S_\eta$ be the set which coincides with $S_\eta$ in $\TTT\setminus B_{3\delta}(0)$, and with 
$\Phi^{-1}(\tilde U^1_\eta\cup V^1_\eta, y^1_\eta)$ in $B_{3\delta}(0)$, and 
$\tilde\varphi_\eta:=\eta^{-3}\chi_{\tilde S_\eta}$.  Then, by construction of $\tilde U^1_\eta$, 
$\int_{\TTT}\tilde\varphi_\eta\,dx = M$ exactly.  In addition, 
$(S_\eta\triangle \tilde S_\eta)\subset B_{2\eta R}(x^1_\eta)$, and so by the same calculation as \eqref{eq:nlterm} we have
$$   \left|\eta\int_{\TTT}\int_{\TTT}
 \left[\tilde\varphi_\eta(x)\tilde\varphi_\eta(y)-\varphi_\eta(x)\varphi_\eta(y)\right]
		    G(x,y)\, dxdy\right| \leq C_4 |\tilde U^1_\eta \triangle U^1_\eta|\leq C't_\eta,
$$
and 
$$  \left|\int_{\TTT} \rho (\tilde\varphi_\eta-\varphi_\eta)\, dx \right| \leq C't_\eta,
$$
for constant $C'$ independent of $\eta$ small, by the second estimate in \eqref{eq:tt}. 
Lastly, by the first inequality in \eqref{eq:tt} we have
$$  \left|\eta \int_{\TTT}\left( |\nabla\tilde\varphi_\eta|-|\nabla\varphi_\eta|\right)\right| \leq Ct_\eta.
$$ 
Thus, for a constant $C$ (independent of $\eta$,) we have:
\begin{align*}
  \E^{\lambda_\eta}_\eta(\tilde\varphi_\eta)-\E^{\lambda_\eta}_\eta(\varphi_\eta)
       &\leq C t_\eta - \lambda_\eta \Big| \, \int_{\TTT} \varphi_\eta \,dx - M\Big| \\
        & = C t_\eta - \lambda_\eta \bigl| \, |U^1_\eta|-|\tilde U^1_\eta| \, \bigr| \\
        & = C t_\eta - \lambda_\eta |m^1_\eta - \mu_\eta| <0
\end{align*}
for all sufficiently small $\eta>0$, as $\lambda_\eta\to\infty$ and 
$t_\eta=\bigO(m^1_\eta - \mu_\eta)$.  This contradicts the choice of $\varphi_\eta$ as the minimizer of $\E^{\lambda_\eta}_\eta$, and thus the lemma is proven.
\end{proof}

The important consequence of $\omega$-minimality is the following regularity result. 

\begin{proposition} \label{prop:omega_min}
Let $\mathcal{O}\subset\RRR$ be a bounded open set, and 
$\trns{\Om}_\eta\subset\mathcal{O}$ be a sequence of $\omega$-minimizers of the perimeter functional such that
	\[
		\sup_{\eta>0} \int_{\mathcal{O}}|\nabla\chi_{\trns{\Om}_\eta}| < +\infty \qquad \text{ and }\qquad \chi_{\trns{\Om}_\eta} \to \chi_{\Om} \quad \text{in } L^1(\mathcal{O})
	\]
for  $\Om\Subset\mathcal{O}$ of class $C^2$. Then, for $\eta$ small enough, $\trns{\Om}_\eta$ is of class $C^{1,1/2}$ and
	\[
		\pt \trns{\Om}_\eta = \{ x+\psi_\eta(x)\nu^\Om(x) \colon x\in\pt\Om \}
	\]
with $\psi_\eta \to 0$ in $C^{1,\alpha}(\pt\Om)$ for all $\alpha \in (0,1/2)$ where $\nu^\Om$ denotes the unit outward normal to $\pt\Om$.
\end{proposition}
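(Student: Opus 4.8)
The plan is to derive Proposition~\ref{prop:omega_min} from the classical regularity theory for almost-minimizers of the perimeter, exploiting crucially that the constant $\omega$ of Definition~\ref{def:omega_min} is \emph{independent of} $\eta$. First I would observe that whenever $A\triangle B\subset\subset B_r(x_0)$ one has $|A\triangle B|\le \tfrac{4}{3}\pi r^3$, so the $\omega$-minimality inequality implies $\int_{\mathcal O}|\nabla\chi_{\trns{\Om}_\eta}|\le \int_{\mathcal O}|\nabla\chi_B|+\Lambda\,|\trns{\Om}_\eta\triangle B|$ with $\Lambda=3\omega/(4\pi)$; that is, each $\trns{\Om}_\eta$ is a $\Lambda$-minimizer of perimeter in $\mathcal O$ in the sense of \cite[Part~III]{Maggi}, with $\Lambda$ uniform in $\eta$. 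The De Giorgi--Tamanini theory then gives that $\pt\trns{\Om}_\eta$ is of class $C^{1,1/2}$, together with the standard interior volume- and perimeter-density estimates at its boundary points, \emph{with constants depending only on $\Lambda$}. I would record these estimates inside a fixed large ball $B_R(0)$ with $\Om\subset\subset B_R(0)$; since $\Om$ is a bounded $C^2$ set this localization costs nothing.

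Second, I would upgrade the hypothesis $\chi_{\trns{\Om}_\eta}\to\chi_\Om$ in $L^1_{\loc}$ to local Hausdorff convergence $\pt\trns{\Om}_\eta\to\pt\Om$. This is the usual argument through the uniform density estimates: if some $x_\eta\in\pt\trns{\Om}_\eta$ stayed a fixed distance away from $\pt\Om$, then $\trns{\Om}_\eta$ and its complement would each fill a definite proportion of a small ball about $x_\eta$ on which $\chi_\Om$ is constant, contradicting $L^1_{\loc}$ convergence; conversely, $\pt\Om$ being $C^2$ it satisfies the same kind of density bounds, so no point of $\pt\Om$ can be far from $\pt\trns{\Om}_\eta$. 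In particular $\pt\trns{\Om}_\eta$ is eventually contained in any prescribed tubular neighborhood of $\pt\Om$.

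Third comes the normal-graph representation. Since $\pt\Om$ is a compact $C^2$ hypersurface it has a tubular neighborhood $\{x+t\,\nu^\Om(x):x\in\pt\Om,\ |t|<\delta\}$ on which the nearest-point projection $\pi$ onto $\pt\Om$ is well defined and $C^1$. By the Hausdorff convergence just established $\pt\trns{\Om}_\eta$ lies in this neighborhood for $\eta$ small, and by the \emph{uniform} $C^{1,1/2}$ estimates the excess of $\trns{\Om}_\eta$ is small at every scale, so $\pt\trns{\Om}_\eta$ has no near-vertical portion relative to $\pt\Om$ and cannot fold back over it; hence $\pi|_{\pt\trns{\Om}_\eta}$ is a bijection onto $\pt\Om$. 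This produces $\psi_\eta\colon\pt\Om\to\R$ with $\pt\trns{\Om}_\eta=\{x+\psi_\eta(x)\nu^\Om(x):x\in\pt\Om\}$, and locally flattening $\pt\Om$ and reading off the resulting graph estimates (as in \cite[Part~III]{Maggi} and \cite{AcFuMo13}) converts the uniform interior bounds into a uniform bound $\|\psi_\eta\|_{C^{1,1/2}(\pt\Om)}\le C$.

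Finally, for the improved convergence I would combine $\|\psi_\eta\|_{C^{1,1/2}(\pt\Om)}\le C$ with $\|\psi_\eta\|_{C^0(\pt\Om)}\to 0$ (a restatement of the Hausdorff convergence): by Arzel\`a--Ascoli and the compact embedding $C^{1,1/2}(\pt\Om)\hookrightarrow\hookrightarrow C^{1,\alpha}(\pt\Om)$ for $\alpha<1/2$, every subsequence of $(\psi_\eta)$ has a further subsequence converging in $C^{1,\alpha}$, whose limit must be $0$ by uniform convergence; hence $\psi_\eta\to 0$ in $C^{1,\alpha}(\pt\Om)$ for every $\alpha\in(0,1/2)$, which also yields that $\pt\trns{\Om}_\eta$ is eventually of class $C^{1,1/2}$. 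The main obstacle is really steps two and three: passing from $L^1_{\loc}$ convergence to a genuine normal graph over $\pt\Om$ --- that is, patching the local flattening estimates into a single globally defined $\psi_\eta$ and verifying that the $C^{1,1/2}$ bounds are uniform in $\eta$, which is exactly where the $\eta$-independence of $\omega$ is indispensable; steps one and four are essentially bookkeeping once the uniform density and excess estimates provided by $\omega$-minimality are in place.
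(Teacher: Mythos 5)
The paper does not prove this proposition at all: it is quoted from \cite[Theorem 4.2]{AcFuMo13} (the improved-convergence theorem for $\omega$-minimizers), with \cite[Theorem 13.8]{Maggi} invoked only to replace $L^1$ by $L^1_{\loc}$ convergence when the limit set is bounded. Your outline essentially reconstructs the proof of that cited theorem --- uniform density estimates, Hausdorff convergence of the boundaries, normal-graph representation over $\pt\Om$, and an Arzel\`a--Ascoli interpolation to get $C^{1,\alpha}$ convergence for $\alpha<1/2$ --- and that architecture is the standard, correct one.

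There is, however, one reversed implication at the very start. From $A\triangle B\subset\subset B_r(x_0)$ one gets $|A\triangle B|\le\tfrac{4}{3}\pi r^3$, hence $\Lambda\,|A\triangle B|\le\omega r^3$ for $\Lambda=3\omega/(4\pi)$; this shows that $\Lambda$-minimality \emph{implies} $\omega$-minimality in the sense of Definition~\ref{def:omega_min}, not the converse that you assert. A competitor for which $A\triangle B$ is a very thin set spread across a ball of radius comparable to $r$ has $|A\triangle B|\ll r^3$, so the bound $\omega r^3$ on the perimeter deficit gives no control of the form $\Lambda\,|A\triangle B|$, and your ``$\Lambda=3\omega/(4\pi)$'' reduction fails. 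The repair is to note that the error term $\omega r^3=\omega r^{(n-1)+2\alpha}$ with $n=3$ and $\alpha=\tfrac12$ is precisely Tamanini's almost-minimality condition, for which the uniform volume- and perimeter-density estimates, the compactness of almost-minimizing sequences, and the $C^{1,1/2}$ $\varepsilon$-regularity theorem hold directly, with constants depending only on $\omega$ (this is the almost-minimizer theory in \cite[Part III]{Maggi} and the setting of \cite{AcFuMo13}). Once the density and excess estimates are obtained this way rather than through a (nonexistent) $\Lambda$-minimality, the remaining steps of your argument go through unchanged.
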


This result appears, for example, in \cite[Theorem 4.2]{AcFuMo13}. There it is stated on the torus; however, when the target set $\Om\subset\RRR$ is bounded, using \cite[Theorem 13.8]{Maggi}, we see that the regularity theorem holds also in this context, with $\mathcal{O}=B_{2R}$ with $R$ as in Lemma~\ref{lem:UV}.

\medskip

Applying the regularity theorem to minimizers of $\E_\eta$ we may conclude that minimizers split exactly and disjointly into the sets $U^i_\eta$ found in Lemma~\ref{lem:compactness}:

\blemma\label{lem:energy_split}
Let $v_\eta=\eta^{-3}\chi_{A_\eta}$ be minimizers of $\E_\eta$ with $|A_\eta|=M\eta^3$.  Using the notation of Lemma~\ref{lem:compactness}, let $n\in\N$, and $u^i_\eta$ and $U^i_\eta\subset\RRR$, $i=1,\dots,n$, as in \eqref{eq:udef}.  Then there exists $R>0$ independent of $\eta$ such that for all sufficiently small $\eta>0$ we have:
$$
U^i_\eta\subset B_R(0), \qquad \sum_{i=1}^n |U^i_\eta| = \sum_{i=1}^n m_\eta^i = M, \quad \text{and} \
v_\eta(x) = \sum_{i=1}^n u^i_\eta(x).
$$
In addition, for sufficiently small $\eta>0$,
	\beqn\label{eqn:energy_split_exact}
		\E_\eta(v_\eta) \geq \sum_{i=1}^n \E_\eta \left( u_\eta^i \right) 
		   +\sum_{i=1\atop i\neq j}^n 
		  \eta \int_{\TTT}\int_{\TTT} {u_\eta^i\, u_\eta^j\over 4\pi |x-y|} \, dxdy + \mathcal{O}(\eta).
	\eeqn
\elemma

That is, $v_\eta$ splits into exactly $n$ well-separated components, the residual sets exactly vanish, that is $\Xi_\eta, V^n_\eta=\emptyset$  for small $\eta$, and we have a sharp lower bound on the energy.

\begin{proof}
Using the notation from the proof of Lemma \ref{lem:compactness}, we have that (after blowing up,)
	\beqn\label{eqn:hatdec}
		\widehat\Om_\eta = \bigcup_{i=1}^n \left[ U^i_\eta + y^i_\eta\right] \cup \widehat\Om_\eta^n,
	\eeqn
a disjoint union in $\RRR$, with $|y^i_\eta-y^j_\eta|\to\infty$ for $i\neq j$.  Moreover, each $\chi_{U^i_\eta}\to \chi_{\Om_i}$ in $L^1(\RRR)$, with each $\Om_i$ a minimizer of $e_0(m^i)$.  By \cite[Theorem 2.7]{BoCr14}, $\Om_i$ is precompact and $\pt \Om_i$ is of class $C^{3,\beta}$ for any $\beta<1$. Thus, there exists $R>0$ so that $\Om_i\subset B_{R/2}(0)$ for all $i=1,\dots,n$.  From the proof of Lemma~2.2 of \cite{FrLi2015}, the radii $r_\eta$ in the definition of $U^i_\eta, V^i_\eta$ may be chosen with $r_\eta\in (R, 2R)$.
By Lemma~\ref{lem:omega_min}, $\widehat\Om_\eta$ are $\omega$-minimizers of the perimeter in $B_{2R}(y^i_\eta)$, with $\omega$ independent of $\eta$, it follows from Proposition~\ref{prop:omega_min} that $\partial U^i_\eta\cap B_{2R}\subset [\partial\widehat\Om_\eta-y^i_\eta]\cap B_{2R}(0)$ is a $C^{1,\alpha}$ graph converging to $\partial\Om_i$ in $C^{1,\alpha}$-norm.  In particular, each $U^i_\eta\subset B_{R}(0)$ for all sufficiently small $\eta>0$. 

We next claim that $V_\eta^n=\emptyset$ for small enough $\eta$.  Indeed, suppose this is not the case.  Then there exists a subsequence $\eta\to 0$ (not relabelled) and point $z_\eta\in\RRR$ for which $z_\eta\in\partial V^n_\eta$.  Consider the sequence of sets $\widetilde V_\eta:=(V^n_\eta-z_\eta)$, which are uniformly $\omega$-minimizers of perimeter in $B_1(0)$, by Lemma~\ref{lem:omega_min}. 
By the compactness of $\omega$-minimizing sequences, Proposition~21.13 of \cite{Maggi}, there is a further subsequence along which $\widetilde V_\eta\to \widetilde V$ in $B_{\frac12}(0)$, and by the regularity Theorem~21.14 of \cite{Maggi}, $\widetilde V$ is itself an $\omega$-minimizer.  Therefore $\partial\widetilde V\cap B_{\frac12}(0)$ is a $C^1$ smooth surface, and by (i) of the same theorem $0\in\partial\widetilde V$.  However, $|\widetilde V\cap B_{\frac12}(0)|=\lim_{\eta\to 0}|\widetilde V_\eta\cap B_{\frac12}(0)|=0$, which is impossible given the smoothness of 
$\widetilde V\cap B_{\frac12}(0)$.  Hence we conclude that $V_\eta^n=\emptyset$ for all small $\eta$.

Finally, we recall that $v_\eta=\eta^{-3}\chi_{A_\eta}$, with $A_\eta=\Om_\eta\cup \Xi_\eta$ a disjoint union, $\Xi_\eta\cap B_{2\delta}(0)=\emptyset$, and 
$\eta^{-3}|\Xi_\eta|\leq \eta^{-3}|A_\eta\setminus B_\delta(0)|\to 0$.  As for $V_\eta^n$, we also claim that $\Xi_\eta=\emptyset$ for all sufficiently small $\eta>0$.  Indeed, assume the contrary and so there exists a subsequence (not relabelled) $\eta\to 0$ and points 
$x_\eta\in \partial\Xi_\eta\subset \TTT\setminus B_{2\delta}(0)$.  By compactness of $\TTT\setminus B_\delta(0)$ we may extract a further subsequence and $x_0\in \TTT\setminus B_{2\delta}(0)$ so that $x_\eta\to x_0$.  Consider $\Xi_\eta\cap B_\delta(x_0)$, and the blowup sets 
$$\tilde \Xi_\eta:=\Phi_{\eta,\delta}(A_\eta\setminus B_{\delta_\eta}(0);x_\eta)=\eta^{-1}[(\Xi_\eta-x_\eta)\cap B_\delta(0)]\subset\RRR.
$$
   As in the preceeding paragraph, $\tilde \Xi_\eta$ are unformly $\omega$-minimizers of perimeter in $B_1(0)$, and (by the choice of $x_\eta$,) $0\in\partial \tilde \Xi_\eta$ for each $\eta$, and so (by the regularity theorem) the sets converge and zero lies in the boundary of the limiting set.  Again, since $|\tilde \Xi_\eta|\to 0$, there is no limiting set, and by this contradiction we may conclude that 
   $\hat\Xi_\eta=\emptyset$ for all small $\eta$.

Since $v_\eta(x)=\eta^{-3}\chi_{A_\eta}(x)$ and $A_\eta=\Omega_\eta\cup\Xi_\eta$, the final identity 
$v_\eta=\sum_{i=1}^n u^i_\eta$ is verified.  As the supports of $u^i_\eta$ are well separated, 
the perimeters split exactly, 
	\[
		\int_{\TTT} |\nabla v_\eta| = \sum_{i=1}^n \int_{\TTT} |\nabla u_\eta^i| .
	\]
The nonlocal term, being quadratic, decomposes into diagonal terms, which are included in $\E_\eta(u_\eta^i)$, and off-diagonal terms,
\begin{align*}  \eta\int_{\TTT}\int_{\TTT} G(x,y) u^i_\eta(x)\, u^j_\eta(y)\, dxdy  
  &=\eta\int_{B_{3\delta}(0)}\int_{B_{3\delta}(0)} \left[{1\over 4\pi |x-y|}+ g(x,y)\right]
          u^i_\eta(x)\, u^j_\eta(y)\, dxdy  
\\
  & = \eta\int_{\TTT}\int_{\TTT} {u^i_\eta\, u^j_\eta\over 4\pi |x-y|} \, dxdy + \mathcal{O}(\eta),
\end{align*}
for $i\neq j$.  Finally, the confinement term is linear in the disjoint components, and so the lemma is proven.
\end{proof}

\medskip

\section{Proof of the lower and upper bounds}\label{sec:proof_of_conv}

Finally, in this section we use the sharp form of the decomposition to prove our main result by providing sharp upper and lower bounds on $\E_\eta(v_\eta)$.  

We begin by verifying the upper bound construction, which follows the heuristic argument given in the Introduction.

\begin{proof}[Proof of Lemma~\ref{lem:upper}]
Let $n\in\N$, points $p_1,\dots,p_n\in\RRR$, and $\{m^i\}_{i=1}^n\in\mathcal{M}$ be given.  Since $e_0(m^i)$ is attained for each $i$, there exists $z^i$ which attains the minimum in \eqref{eqn:nlip}.  Since the support of $z^i$ are bounded, and we only consider a finite number of them, there exists a constant $r>0$ for which   $Z^i:=\supp z^i \subset B_r(0)$ for all $i=1,\ldots,n$.  Let
\[
		\nu_\eta^i \,:=\, \frac{1}{\eta^3}\,z^i\left(\frac{x-\eta^{1/3}p_i}{\eta}\right),
		 \quad \text{ and }\quad \nu_\eta := \sum_{i=1}^n \nu_\eta^i.
	\]
Since $\supp\nu_\eta\subset B_{\frac14}(0)$ for all sufficiently small $\eta>0$, we also view $\nu_\eta$ as a function on $\TTT$.

Then plugging $\nu_\eta$ into $\E_\eta$ we get that
		\begin{align}\nonumber
   \E_{\eta}(\nu_\eta) &= \sum_{i=1}^n\eta \int_{\TTT} |\nabla \nu_\eta| + \eta \sum_{i,j=1}^n \int_{\TTT}\!\int_{\TTT} G(x-y)\nu_\eta^i(x)\nu_\eta^j(y)\,dxdy - 
			\sum_{i=1}^n \int_{\TTT} \nu_\eta^i(x) \rho(x)\, dx
			\\
		&=	\sum_{i=1}^n e_0(m_i) -M\rho_{\max} +
		 \eta \sum_{\substack{i,j=1\\ i\neq j}}^n \int_{\TTT}\!\int_{\TTT} {\nu_\eta^i(x)\nu_\eta^j(y)\over 4\pi |x-y|}\,dxdy  +\sum_{i=1}^n \int_{\TTT} \nu_\eta^i(x) \big( \rho_{\max}-\rho(x) \big)\,dx + O(\eta),  
		 \label{eq:up3}
		\end{align}
where we have used the representation of the Green's function in terms of the Newtonian kernel and its regular part in $B_{\frac14}(0)$, and the choice of $z^i$ as minimizers of $e_0(m^i)$.

Taking each term separately, we evaluate for $i\neq j$, via the change of variables
$\eta\xi=x-\eta^{1/3}p_i$, $\eta\zeta=y-\eta^{1/3}p_j$,
\begin{align}\nonumber
\eta\int_{\TTT}\!\int_{\TTT} {\nu_\eta^i(x)\nu_\eta^j(y)\over 4\pi |x-y|}\,dxdy
   &= \eta^{2/3}\int_{Z^i}\int_{Z^j} {1\over 4\pi|\eta^{2/3}(\xi-\zeta)-(p_i-p_j)|} d\xi\, d\zeta \\
   \label{eq:up1}
   &= \eta^{2/3} {m^i\, m^j\over 4\pi|p_i-p_j|} + o(\eta^{2/3}),
\end{align}
by Dominated Convergence applied to the integral as $\eta\to 0$.  And, in a similar way, using hypothesis (H3) on $\rho(x)$, 
\begin{align}\nonumber
\int_{\TTT} \nu_\eta^i(x) \big( \rho_{\max}-\rho(x)  \big)\,dx
&= \int_{Z^i} q\left(\eta^{1/3}[p_i+\eta^{2/3}\xi] \right) \, d\xi+ o(\eta^{2/3}) \\
\nonumber
&=  \eta^{2/3} \int_{Z^i} q(p_i+\eta^{2/3}\xi)\, d\xi + o(\eta^{2/3})  \\
\label{eq:up2}
&= \eta^{2/3} m^i q(p_i) + o(\eta^{2/3}).
\end{align}
Inserting \eqref{eq:up1}, \eqref{eq:up2} into \eqref{eq:up3} yields the desired upper bound for $n\geq 2$.
				
\medskip

For $M\in\Izero$ , we take $n=1$, $p_1=0$ and $z=\chi_{Z}$  which attains $e_0(M)$, and define
$\nu_\eta(x)=z(x/\eta)$.  Then, as in \eqref{eq:up3} we have
\begin{align*}
  \E_\eta(\nu_\eta)= e_0(M) - M\rho_{\max} +\int_Z \left[\rho(\eta\xi) -\rho_{\max}\right] d\xi + O(\eta) \\
   \leq e_0(M) - M\rho_{\max} + O(\eta), 
   \end{align*}
as the last integral is $O(\eta^2)$ (by hypothesis (H3),) but the principal error comes from the regular part of the Green's function.
\end{proof}

\bigskip

It remains to derive a matching sharp lower bound in order to complete the proof of the main result. We begin by showing that the droplet centers $\{x^i_\eta\}$ for a sequence of minimizers $v_\eta$ converge to zero at a precise rate.
	
\blemma\label{lem:points_xi}  Let $n\in\N$ be given as in Lemma~\ref{lem:compactness}.  If $n\geq 2$, then  the points $\{x_\eta^i\}_{i=1}^n$ found in Lemma \ref{lem:compactness} satisfy
		\[
			x_\eta^i = \mathcal{O}(\eta^{1/3})
		\]
	for each $i=1,\ldots,n$.  If $n=1$, then the points $y_\eta$ from Step 1 in the proof of Lemma \ref{lem:compactness} satisfy
		\[
			y_\eta = \mathcal{O}(\eta^{1/2}). 
		\]
\elemma

\begin{proof} Let $v_\eta$ be a minimizer of $\E_\eta$ with $\int_{\TTT} v_\eta\,dx=M$.
Define
			\beqn \label{eqn:alpha_beta_defn}
				\lambda_\eta \,:=\, \min_{i\neq j} |x_\eta^i - x_\eta^j| \qquad \text{ and }\qquad \beta_\eta \,:=\, \max_{i} |x_\eta^i|
			\eeqn
over finite number of indices $i,\, j=1,\ldots,n$, and let $\{u_\eta^i\}_{i=1}^n$ with 
$v_\eta=\sum_{i=1}^n u_\eta^i$ be the functions found in Lemma \ref{lem:compactness}.   Since by their construction, $|x^i_\eta-x^j_\eta|\gg\eta$, it follows that $\beta_\eta\gg \eta$.

\noindent \textit{\textbf{Step 1.}}  Evaluating the nonlocal and confinement terms.  We will require the following simple estimates for the lower and upper bounds on the energy as well.  Assume $n\geq 2$.  Then, since $U^i_\eta\subset B_R(0)$, for sufficiently small $\eta$ we have
$$  \left| {1\over |x-y|} -{1\over |x^i_\eta - x^j_\eta|}\right| \leq {4R\eta\over |x^i_\eta - x^j_\eta|^2}, $$
for all $x\in \supp u^i_\eta = (\eta U^i_\eta + x^i_\eta)$ and $y\in \supp u^j_\eta = (\eta U^j_\eta + x^j_\eta)$, $i\neq j$.  
Hence,
\beqn\label{eq:nlest}
\int_{\TTT}\int_{\TTT} {u^i_\eta\, u^j_\eta\over 4\pi |x-y|} \, dxdy
    = {m^i_\eta m^j_\eta\over 4\pi |x^i_\eta - x^j_\eta|} + O(\eta|x^i_\eta - x^j_\eta|^{-2})
    = {m^i m^j\over 4\pi |x^i_\eta - x^j_\eta|}(1-o(1)).
\eeqn

To estimate the confinement term, choose $k$ for which $|x_\eta^k|=\beta_\eta$.
For $x\in (\eta U^k_\eta + x^k_\eta)$ we have $|x|\ge \frac12\beta_\eta$ and hence using (H3) on the structure of $\rho(x)$ near zero, we may conclude the rough estimate: 
$$  \rho(x)- \rho_{\max}\le -q(x)+o(|x|^2) \leq -c_2 \beta_\eta^2 ,
    $$
    for constant $c_2>0$ independent of $\eta$.
This implies that
\beqn\label{eq:conest}
\int_{\TTT} u^k_\eta\, \rho(x)\, dx \le m^k_\eta\left(\rho_{\max} - c_2\beta_\eta\right) .
\eeqn
We thus have a rough lower bound on the energy,
\begin{align}\label{eq:ienergy}
\E_\eta(v_\eta) &\geq \sum_{i=1}^n \left(e_0(m^i_\eta) -m_\eta^i\rho_{\max}\right)
   + \sum_{i,j=1\atop i\neq j}^n \eta {m^i m^j\over 4\pi |x^i_\eta - x^j_\eta|} (1-o(1))
   + c_1 m^k_\eta\, \beta_\eta^2 
    \\
   \nonumber
   &\geq e_0(M) - M\rho_{\max} + \sum_{i,j=1\atop i\neq j}^n \eta {m^i m^j\over 4\pi |x^i_\eta - x^j_\eta|} (1-o(1))
   + c_2\,\beta_\eta^2,
\end{align}
since $M=\sum_{i=1}^n m^i_\eta$, with $c_2>0$ independent of $\eta$.

\noindent \textit{\textbf{Step 2.}}  The scale of concentration, $n\geq 2$.
Let $k\neq\ell\in \{1,\dots,n\}$ be such that (along some subsequence $\eta\to 0$,) $|x^k_\eta|=\beta_\eta$, and  $|x^k_\eta-x^\ell_\eta|=\lambda_\eta$.
Matching the upper bound \eqref{eq:upper2} to the lower bound \eqref{eq:ienergy}, 
\begin{align*}  e_0(M)-M\rho_{\max} + \eta^{2/3}\mu_0 \geq \tld{\E}_\eta(v_\eta)
  &\geq e_0(M)-M\rho_{\max} + \eta {m_k m_\ell\over 4\pi |x^k_\eta-x^\ell_\eta|}(1-o(1))
     + c_2\beta_\eta^2\\
    &\geq e_0(M)-M\rho_{\max} + \eta {c_1\over \lambda_\eta}
     + c_2 \beta_\eta^2,
 \end{align*}
We may then conclude that 
\[
		\lambda_\eta \geq C_1 \eta^{1/3} \qquad \text{ and }\qquad \beta_\eta \leq C_2 \eta^{1/3}
	\]
for some constants $C_1,\,C_2>0$, as desired when $n\geq 2$.

\medskip

\noindent \textit{\textbf{Step 3.}}  When $n=1$, we have the simpler lower bound, since there are no interaction terms:
\beqn \label{eqn:LB1}
\E_\eta(v_\eta) \geq  e_0(M) - M\rho_{\max} + |y_\eta|^2 - o(\eta) = e_0(M) - M\rho_{\max} + \beta_\eta^2 - o(\eta)
\eeqn
Comparing the upper bound \eqref{eq:upper1} from Lemma~\ref{lem:upper} with the lower bound \eqref{eqn:LB1}, we have
	\[
		  e_0(M)-M\rho_{\max}+\beta_\eta^2-\mathcal{O}(\eta)\leq \E_\eta(v_\eta)\leq e_0(M)-M\rho_{\max} +\mathcal{O}(\eta)
	\]
and hence $|y_\eta|=\beta_\eta\leq \mathcal{O}(\eta^{1/2})$ as demanded.
\end{proof}

\bigskip

We are now ready to prove the main result stated in the Introduction.

\begin{proof}[Proof of Theorem \ref{thm:minimizers}]

Let $v_\eta\in BV(\TTT;\{0,1/\eta^3\})$ with $\int_{\TTT} v_\eta\,dx=M$ be a minimizer of $\E_\eta$ for each $\eta>0$.  Applying Lemma \ref{lem:compactness}, there exist $n\in\N$, $\{m^i\}_{i=1}^n\in\mathcal{M}$, and $\{x^i_\eta\}_{i=1}^n\subset\TTT$ satisfying the conditions given there. 
If $n=1$, then conclusion (ii) in the statement of the theorem holds, by the second part of Lemma~\ref{lem:points_xi} and \eqref{eq:one} from Lemma~\ref{lem:compactness}.

If instead $n\geq 2$, by the first part of Lemma \ref{lem:points_xi} we have that $x_\eta^i = \mathcal{O}(\eta^{1/3})$ for every $i=1,\ldots,n$. That is, the sequences $\{\eta^{-1/3} x_\eta^i\}_{\eta>0}\subset \TTT$ are bounded for each $i=1,\ldots,n$. Passing to a subsequence let us define
	 		\[
				x_i \,:=\, \lim_{\eta\to 0} \eta^{-1/3}x_\eta^i \in \TTT
			\]
for each $i=1,\ldots,n$. 
Moreover, by Lemma~\ref{lem:energy_split}, using a change of variables as in \eqref{eq:up1}, we have the precise lower bound,
\begin{align*}
 \E_\eta(v_\eta) 
 &\geq \sum_{i=1}^n \E_\eta \left( u_\eta^i \right)  
		   +\sum_{i,j=1\atop i\neq j}^n 
		   \eta\int_{\TTT}\int_{\TTT} {u^i_\eta\, u^j_\eta\over 4\pi |x-y|} \, dxdy 
		   + \mathcal{O}(\eta) \\
&\geq \sum_{i=1}^n \left(\cG(U_\eta^i) - \int_{U_\eta^i} \rho(x_\eta^i + \eta\xi)\, d\xi\right)
+\sum_{i,j=1\atop i\neq j}^n 
		  \eta\int_{U_\eta^i}\int_{U_\eta^j} {1\over 4\pi |(x^i_\eta-x^j_\eta)+ \eta(\xi-\zeta)|} 
		   d\xi\, d\eta 
		   + \mathcal{O}(\eta) \\
	&\geq \sum_{i=1}^n e_0(m^i_\eta)  - M\rho_{\max} 
	+\sum_{i,j=1\atop i\neq j}^n 
		   \eta\int_{U_\eta^i}\int_{U_\eta^j} {1\over 4\pi |(x^i_\eta-x^j_\eta)+ \eta(\xi-\zeta)|} 
		   d\xi\, d\eta 
		   \\  &\qquad \quad 
		   + \sum_{i=1}^n \int_{U_\eta^i} [\rho_{\max}-\rho(x_\eta^i + \eta\xi)]\, d\xi
		   + \mathcal{O}(\eta)
		   \\
	&\geq  e_0(M) - M\rho_{\max} 
	+\sum_{i,j=1\atop i\neq j}^n 
		  \eta^{2/3} \int_{U_\eta^i}\int_{U_\eta^j} {1\over 4\pi |\eta^{-1/3}(x^i_\eta-x^j_\eta)+ \eta^{2/3}(\xi-\zeta)|} 
		   d\xi\, d\eta 
		   \\  &\qquad \quad 
		   + \eta^{2/3}\sum_{i=1}^n\int_{U_\eta^i} q(\eta^{-1/3}x_\eta^i + \eta^{2/3}\xi)\, d\xi
		   + o(\eta^{2/3})
\end{align*}

Now, since each $\eta^{-1/3}x_\eta^i\to x_i$, and the $U_\eta^i\to \Om_i$ globally, and all sets are compact in $\RRR$, by Dominated Convergence we may conclude that
$$  \lim_{\eta\to 0}  \int_{U_\eta^i}\int_{U_\eta^j} {1\over 4\pi |\eta^{-1/3}(x^i_\eta-x^j_\eta)+ \eta^{2/3}(\xi-\zeta)|} 
		   d\xi\, d\eta = {m^i m^j \over 4\pi |x_i-x_j|}, 
$$
and
$$		   
	 \lim_{\eta\to 0} \int_{U_\eta^i} q(\eta^{-1/3}x_\eta^i + \eta^{2/3}\xi)\, d\xi =
	       m^i q(x_i),
$$
and arrive at the desired lower bound,
$$  \E_\eta(v_\eta)\geq e_0(M) - \rho_{\max}M + \eta^{2/3}\left\{
     \sum_{i,j=1\atop i\neq j}^n {m^i m^j \over 4\pi |x_i-x_j|} + \sum_{i=1}^n m^i q(x_i)\right\}
     + o(\eta^{2/3}).
$$

Matching this lower bound with the upper bound from Lemma~\ref{lem:upper}, the two match up to order $o(\eta^{2/3})$, with the $\{m^i\}$ and $\{x_j\}$ as selected by the concentration lemma.  Since the upper bound holds for any choice of masses and rescaled centers, it holds for those which minimize the quantity $\F_{m^1,\dots,m^n}$, and so those must be the values which appear in a minimizing sequence for $\E_\eta$.  Thus, the conclusion (iii) holds whenever $n\geq 2$ in Lemma~\ref{lem:compactness}.

It remains to connect the number $n$ of components with the question of whether $M\in \Izero$ or not.
First, if $M\not\in\Izero$, then  $n\geq 2$ by the last assertion in the statement of Lemma~\ref{lem:compactness}.  To complete the argument, assume $M\in \Izero$ and $n\geq 2$.  But then,
Lemma~\ref{lem:upper} provides a stronger upper bound which doesn't match with the lower bound for $n\geq 2$,
$$  e_0(M)-M\rho_{\max} + \mathcal{O}(\eta) \geq \E_\eta(v_\eta)
     \geq e_0(M)-M\rho_{\max} + \eta^{2/3}\F_{m^1,\dots,m^n} + o(\eta^{2/3}).
$$

This can only be true if $\F_{m^1,\dots,m^n}=0$, which is impossible.  Thus we must have (i) holding for $M\in \Izero$, by \eqref{eq:one} in Lemma~\ref{lem:compactness}. 
\end{proof}

\section{Penalization measures with degenerate maxima}\label{sec:degenerate_max}

Finally, in this section we consider the situation where the penalization measure defined via the density function $\rho$ is degenerate at its maximum value, in the sense that the Hessian degenerates at a single maximum point.  Note that by Remark \ref{rem:first_ord_general_rho} the first-order approximation still holds in such cases. 
It is at the second order that the behavior will differ, as the interaction energy will depend on the structure of $\rho$ at its maximum.

The case where $\rho$ attains its maximum at a unique point (say at $x=0$), but with vanishing Hessian, does not entail much variation from the arguments used in the nondegenerate case above.  Indeed, the concentration structure given in Lemmas~\ref{lem:compactness} and \ref{lem:energy_split} does not rely on the nondegeneracy hypothesis (H3), which only makes itself felt in the scaling arguments which determine the interdroplet distance in the upper bound construction of Lemma~\ref{lem:upper} and in the corresponding lower bound estimates of Lemma~\ref{lem:points_xi}.
In particular, we may replace the nondegeneracy condition (H3) by a homogeneity assumption of the form,
\begin{itemize} 
	\setlength\itemsep{1em}  
		\item[(H3')]  There exist $q>2$ and $r,\rho_1>0$ so that $\rho\in C^1(B_r(0))$ and 
	\[
		\rho(x) \,:=\, \rho(0) - \rho_1 |x|^q + o(|x|^q), \qquad\text{as $x\to 0$,}
	\]
\end{itemize}
without changing the concentration structure of minimizers.  However, this choice will alter the geometry of minimizing droplet configurations: the appropriate secondary scale for pattern formation occurs at distance of $\bigO(\eta^{1/(q+1)})$, and the energy scale for interaction between droplets will be of order $\bigO(\eta^{q/(q+1)}$.  That is, minimizers $v_\eta$ will have an energy expansion of the form
$$   \E_\eta(v_\eta) = e_0(M) -M\rho_{\max} + \eta^{{q\over q+1}}\, F_0(v) + o\left(\eta^{{q\over q+1}}\right),
$$
with
	\beqn 
		\F_{m^1,\dots,m^n}(v) \,:=\, \begin{dcases*}
								\rho_1 \sum_{i=1}^n m^i|x_i|^q + \frac{1}{4\pi}\sum_{\substack{i,j=1\\ i\neq j}}^n \frac{m^i\,m^j}{|x_i-x_j|} & if $v=\sum_{i=1}^n m^i\delta_{x_i}$ with $\{x_i\}$ distinct, $\{m^i\}\in\mathcal{M}$, \\
								+\infty																													& otherwise
						   \end{dcases*}
	    \nonumber
	\eeqn
and $\{m^i\}\in\mathcal{M}$, $\{x_i\}$ chosen to minimize $\F_{m^1,\dots,m^n}(v)$.

\bigskip


\section{Appendix}

Here we briefly discuss the physics behind diblock copolymers, the role of nanoparticle attraction in confinement, and the droplet scaling regime.

\subsection*{Diblock copolymers and confinement}\label{sec:scaling}

Diblock copolymers are macromolecules composed of two  chemically distinct homogeneous 
polymer chains (of monomer species A and B respectively) linked together by a covalent bond (cf. \cite{BatesFredrickson99, Fred-book}). The thermodynamical incompatibility between the different sub-chains drives the system to phase separate; however, the covalent
bonds between the different sub-chains prevent phase separation at a macroscopic length 
scale. As a result of these two competing trends, block copolymers undergo phase
separation at a nanometer length scale, leading to an amazingly rich array of nanostructures.
Perhaps the simplest model for self-assembly of diblock copolymers is via the \emph{Ohta--Kawasaki functional} \cite{OK}, a phase-field model (diffuse interface) with nonlocal interactions.  Recently, there has been considerable interest in the physics and engineering communities on self-assembly of diblock copolymers under confinement (see for example \cite{Shi} and the references therein). Of particular interest is the situation where the length of the confinement region is comparable with the length of the diblock copolymer macromolecules. 

Recently three of the authors (see \cite{AlBrTo15}) introduced and analyzed a simple model for a polymer/nanoparticle blend, wherein the nanoparticles were attracted to one of the two molecular species in the polymers.  
Copolymer/nanoparticle interaction is modeled in the Ohta--Kawasaki functional by the addition of a penalization term which prefers the $u=1$ phase in the neighborhood of each nanoparticle \cite{Ginzburg_et_al_1999}.  Assuming a fixed limiting distribution of a large number of nanoparticles occupying a vanishingly small volume of the sample, and passing to the sharp interface limit, \cite{AlBrTo15} obtain a model for diblock copolymers under confinement of the form,
\beqn\label{eqn:energy_gamma_sigma}
		\varE_{\gamma,\sigma}(u) = \int_{\TTT} |\nabla u| \,+\, \gamma \, \|u-m\|_{H^{-1}(\TTT)}^2 \,+\, \sigma \, \int_{\TTT}(1-u(x))\rho(x)\,dx, \qquad \int_{\TTT} u\,dx =m.
		\nonumber
	\eeqn
  For relatively large $\sigma>0$, the region of high nanoparticle density effectively confines the $\{ u = 1\}$ phase to this region, and we may simply view the penalization term in the energy as a confinement term subject to the density $\rho$.  Indeed, an example in \cite{AlBrTo15} illustrates how confinement modifies minimizing patterns in the isoperimetric problem ($\sigma=0$), from lamellar to a sphere, with increasing $\sigma$.

\subsection*{The Droplet scaling regime}\label{sec:Droplet}
We arrive at the desired singularly perturbed energy functional \eqref{venergy} by extending the droplet scaling of \cite{ChPe2010} to encompass the confinement term in \eqref{eqn:energy_gamma_sigma}.
First, we define an auxiliary shifted energy by
	\beqn\label{eqn:energy_shifted}
		\begin{aligned}
		\E_{\gamma,\sigma}(u) &:= \varE_{\gamma,\sigma}(u) - \sigma\int_{\TTT}\rho(x)\,dx  \\
							   &= \int_{\TTT} |\nabla u| \,+\, \gamma \, \|u-m\|_{H^{-1}(\TTT)}^2 \,-\, \sigma \, \int_{\TTT} u(x)\rho(x)\,dx.
		\end{aligned}
		\nonumber
	\eeqn

\bigskip

We now introduce the master parameter $0<\eta\ll 1$, intended to represent the approximate diameter of the droplets of the phase $\{u=1\}$.  The droplet volume will tend to zero, so we need to adjust the order parameter to produce particles of fixed mass, $M>0$.  Thus, we introduce a concentrating order parameter $v_\eta:\TTT \to \{ 0,1/\eta^3\}$ via 
$$  v_\eta(x):= \eta^{-3} u(x), \quad\text{so}\quad \int_{\TTT} v_\eta\,dx = \eta^{-3}m:= M.  $$
We note that the support of $v_\eta$ is the same as that of $u$.  
 We then have
	\beqn\label{eqn:energy_scaling}
				\E_{\gamma,\sigma}(u) = \eta^2 \left[ \eta\,\int_{\TTT} |\nabla v_\eta| \,+\, \gamma \, \eta^4   \|v_\eta - M\|_{H^{-1}(\TTT)}^2 \,-\, \sigma\eta \, \int_{\TTT} v_\eta\rho(x)\,dx \right].
		\eeqn
Given the anticipated scaling for $v_\eta$ as suggested by the ansatz \eqref{eq:ansatz}, we expect each term in the brackets to be of order one, and thus we define the droplet regime by the following choice of the parameters,
	\[
		\gamma = \frac{1}{\eta^3} \qquad\text{ and }\qquad \sigma = \frac{1}{\eta}.
	\]
Thus, we define the rescaled energy as follows:
	\beqn\label{eqn:energy_first_order}
		\begin{aligned}
		\E_{\eta}(v) &:= \frac{1}{\eta^2} \E_{\gamma,\sigma}(u) = \frac{1}{\eta^2} \left(\varE_{\gamma,\sigma}(u) - \sigma\int_\TTT\rho\right) \\
					 &= \eta\, \int_{\TTT} |\nabla v| \,+\, \eta \, \|v-M\|_{H^{-1}(\TTT)}^2 \,-\, \int_{\TTT} v(x)\rho(x)\,dx.
		\end{aligned}
	\eeqn

\bigskip

\noindent {\bf Acknowledgements.} The authors were supported by NSERC (Canada) Discovery Grants. IT was also supported by a Field--Ontario Postdoctoral Fellowship.  We are also grateful to An-Chang Shi for pointing out that the functional used in  \cite{AlBrTo15} can also be used as a model problem for diblock copolymers under confinement.

\bibliographystyle{IEEEtranS}
\def\url#1{}
\bibliography{NanopartBib}

\end{document}